\newtheorem{theorem}{Theorem}[section]
\newtheorem{lemma}[theorem]{Lemma}
\newtheorem{proposition}[theorem]{Proposition}
\newtheorem{corollary}[theorem]{Corollary}
\theoremstyle{definition}
\newtheorem{definition}[theorem]{Definition}
\theoremstyle{remark}
\newtheorem{remark}[theorem]{Remark}
\numberwithin{equation}{section}
\begin{document}
	
	\title[Abstract Key Polynomials and   MacLane-Vaqui\'e chains]{Abstract Key Polynomials and   MacLane-Vaqui\'e chains}
	\author[Sneha Mavi]{Sneha Mavi}
	\address{Department of Mathematics\\ University of Delhi\\  Delhi-110007, India.}
	\email{mavisneha@gmail.com}
	\author[Anuj Bishnoi]{Anuj Bishnoi$^\ast$}
	\address{Department of Mathematics\\  University of Delhi \\   Delhi-110007, India.}
	\email{abishnoi@maths.du.ac.in}
	
	\begin{abstract}
	In this paper, 	for a  valued field $(K,v)$ of arbitrary rank  and an extension $w$ of $v$ to $K(X),$   a relation between  induced complete sequences of abstract key polynomials and   MacLane-Vaqui\'e chains is given.
	\end{abstract}
	\subjclass[2020]{12F20, 12J10,  13A18}
	\keywords{Abstract key polynomials, key polynomials,  MacLane-Vaqui\'e chains}
	\thanks{$^\ast$Corresponding author, E-mail address: abishnoi@maths.du.ac.in}
	\maketitle
	
	\section{Introduction }
Let $(K,v)$ be a valued field.   Starting with a valuation $w_0$ of $K(X),$ extending $v,$ which admit key polynomials of degree one,  Nart \cite{EN1} introduced the notion of MacLane-Vaqui\'e  chains
	\begin{align*}
		w_0\xrightarrow{\phi_1,\gamma_1} w_1\xrightarrow{\phi_2,\gamma_2}\cdots \longrightarrow w_{n-1}\xrightarrow{\phi_n,\gamma_n} w_n\longrightarrow\cdots
	\end{align*}
	 consisting of a mixture of ordinary and limit augmentations satisfying some conditions (see Definition \ref{1.1.13}).
	  The main result (Theorem 4.3 of \cite{EN1}) says that all extensions  $w$  of $v$ to $K(X)$ fall exactly in one of the following categories:
	\begin{enumerate}[(i)]
		\item It is the last valuation of a complete finite MacLane-Vaqui\'e chain, i.e., after a finite number $ r$ of augmentation steps,  $w_r=w.$
		\item After a finite number $r$ of augmentation steps, it is the stable limit of a continuous family of augmentations of $w_r$ defined by key polynomials of constant degree.
		\item  It is the stable limit of a complete infinite MacLane-Vaqui\'e chain.
	\end{enumerate}
It is known that   \cite[Theorem 1.1]{NS}, every valuation $w$ on $K(X)$ admits a complete sequence of abstract key polynomials. 
Moreover, in  \cite{MMS} Mahboub et al.\ described a complete sequence of abstract key polynomials  for $w$  satisfying certain properties (see Remark \ref{1.1.3}) and we  call this sequence an \emph{induced complete sequence of abstract key polynomials} for $w.$ In this paper, we prove that the concepts of  MacLane-Vaqui\'e chains and  induced complete sequences of abstract key polynomials for  $w,$ are intimately connected.

	To state the  main result of  the paper,  we first recall some notation, definitions, and preliminary results.

	\section{Notation, Definitions, and Statements of Main Results}
	
	Throughout the paper, $(K,v)$ denote a    valued field of arbitrary rank  with value group $\Gamma_ v$
	 and    residue field  $k_{v},$  and by  $\bar{v}$ we denote an extension of $v$ to a fixed algebraic closure $\overline{K}$ of $K.$

	\medskip

		An extension $w$ of $v$ to the simple transcendental extension $K(X)$ of $K$  such that $k_w$ is algebraic over $k_v$  is said to be \emph{valuation-algebraic} if the quotient $\Gamma_{w}/\Gamma_v$ is a torsion group and   is said to be \emph{value-transcendental}  if $\Gamma_w/\Gamma_v$ is a torsion-free group.  The extension $w$ of $v$ to $K(X)$ is called \emph{residually transcendental}  if 
		the corresponding residue field extension $k_{w}| k_v$ is transcendental.
		We call $w$  \emph{valuation-transcendental} if $w$ is either value-transcendental or is residually transcendental.

	An extension $\overline{w}$ of $w$ to $\overline{K}(X)$ which is also an extension of $\bar{v}$ is called a \emph{common extension} of $w$ and $\bar{v}.$

	\subsection{Abstract key polynomials}
	\begin{definition}
	Let $w$ be a valuation of $K(X)$ and $\overline{w}$ a fixed common extension of $w$ and $\bar{v}$ to $\overline{K}(X).$
For any polynomial $f$ in $K[X],$  we define 
$$\delta(f):=\max\{\overline{w}(X-\alpha)\mid f(\alpha)=0\}.$$
\end{definition}
	This  value $\delta(f)$ does not depend upon the choice of $\overline{w}$ (see  \cite[Proposition 3.1]{JN}).
	\begin{definition}[\bf Abstract key polynomials]
		A monic polynomial $Q$ in $K[X]$ is said to be an \emph{abstract key polynomial}  (abbreviated as ABKP) for $w$ if for each polynomial $f$ in $K[X]$   with $\deg f< \deg Q$ we have $\delta(f)<\delta(Q).$
	\end{definition}
	It is immediate from the definition that all monic  linear polynomials are ABKPs for $w.$  Also an ABKP for $w$ is an irreducible polynomial (see    \cite[Proposition 2.4]{NS}).
	\begin{definition}
		For a polynomial $Q$ in $K[X]$ the \emph{$Q$-truncation} of $w$ is a map $w_Q:K[X]\longrightarrow \Gamma_w$ defined by 
		$$ w_Q(f):= \min_{i\geq 0}\{w(f_iQ^i)\},$$
		where $f=\sum_{i\geq o} f_i Q^i,$ $\deg f_i <\deg Q,$  is the $Q $-expansion of $f.$ 
	\end{definition}
	The $Q$-truncation  $w_Q$  of $w$ need not be a valuation  \cite[Example 2.5]{NS}. However,  if $Q$ is an ABKP for $w,$ then $w_Q$ is a valuation on $K(X)$ (see  \cite[Proposition 2.6]{NS}). Note that any ABKP, $Q$ for $w,$ is also an ABKP for the truncation valuation $w_Q.$

	\begin{definition}\label{1.1.12}
		A family $\Lambda=\{Q_i\}_{i\in\Delta}$ of ABKPs for $w$ is said to be a complete sequence of ABKPs for $w$ if the following conditions are satisfied:
		\begin{enumerate}[(i)]
			\item $\delta(Q_i)\neq \delta(Q_j)$ for every $i\neq j\in\Delta.$
			\item $\Lambda$ is well-ordered with respect to the ordering given by $Q_i< Q_j$ if and only if  $\delta(Q_i)<\delta(Q_j)$ for every $i<j\in \Delta.$ 
			\item For any $f\in K[X],$ there exists some $Q_i \in \Lambda$ such that $\deg Q_i\leq \deg f$ and  $w_{Q_i}(f)=w(f).$
		\end{enumerate}
	\end{definition}
	It is known that  \cite[Theorem 1.1]{NS}, every valuation $w$ on $K(X)$ admits a complete sequence of ABKPs. 
	Moreover, there is a complete sequence  $\Lambda=\{Q_i\}_{i\in\Delta}$ of ABKPs  for  $w$ having the following properties (see   \cite[Remark 4.6]{MMS} and  proof of  \cite[Theorem 1.1]{NS}).
	\begin{remark} \label{1.1.3}
		\begin{enumerate}[(i)]
		\item $\Delta=\bigcup_{j\in I}\Delta_j$ with $I=\{0,1,\ldots, N\}$ or $\mathbb{N}\cup\{0\},$ and for each $j\in I$ we have $\Delta_j=\{j\}\cup\vartheta_{j},$ where $\vartheta_j$ is an ordered set without a last element or is empty.
		\item $\deg Q_0=1.$
		\item For all $j\in I\setminus \{0\}$ we have $j-1<i<j,$ for all $i \in\vartheta_{j-1}.$
		\item All  polynomials $Q_i$ with $i\in\Delta_j$ have the same degree and  have degree strictly  less than  the degree of the polynomials $Q_{i'}$ for every $i'\in\Delta_{j+1}.$
		\item For each $i<i'\in\Delta$ we have $w(Q_i)<w(Q_{i'})$ and $\delta(Q_i)<\delta(Q_{i'}).$
		\end{enumerate}
	\end{remark}
The complete sequences of ABKPs satisfying the properties of Remark \ref{1.1.3} will be called  {\bf induced complete sequences of ABKPs} for $w.$

		Even though the set $\{Q_i\}_{i\in\Delta}$  of ABKPs  for $w$  is not unique, the cardinality of $I$ and the degree of an abstract key polynomial $Q_i$ for each $i\in I$ are uniquely determined by $w.$
	
	The ordered set $\Delta$ has a last element if and only if the following holds:
	\begin{align}\label{1.6}
		I=\{0,1,\ldots,N\}~\text{is finite, and}~ \Delta_N=\{N\}~ \text{(i.e., $\vartheta_N=\emptyset$).}
	\end{align}
	  
\subsection{ MacLane-Vaqui\'e chains}
We first recall the notion of key polynomials which was first introduced by MacLane in 1936 and later generalized by Vaqui\'e in 2007  (see \cite{M, V}).  
\begin{definition}
	For a valuation $w$ on $K(X)$ and polynomials $f,$ $g$ in $K[X],$ we say that
	\begin{enumerate}[(i)]
		\item  $f$ and $g$ are $w$-equivalent and write $f\thicksim_{w} g$ if $w(f-g)>w(f)=w(g).$
		\item  $f$  $w$-divides $g$ (denoted by $f\mid_{w}g$) if there exists some polynomial $h \in K[X]$ such that $g\thicksim_{w} fh.$
		\item  $f$ is $w$-irreducible if for any $h,\, q\in K[X],$ whenever $f\mid_{w} hq,$ then either $f\mid_{w}h $ or $f\mid_{w}q.$
		\item $f$ is $w$-minimal if for every nonzero polynomial $h\in K[X],$ whenever $f\mid_{w}h,$ then $\deg h\geq \deg f.$
	\end{enumerate}
	\end{definition}
 \begin{definition}[\bf Key polynomials]
 	A monic polynomial $f$ in $K[X]$   is called a  \emph{key polynomial} for $w,$ if $f$ is $w$-irreducible and $w$-minimal.
 \end{definition}
 In view of  Proposition 2.10 of  \cite{Ma}  any ABKP, $Q$ for $w$ is a key polynomial for $w_Q$ of minimal degree. 
 Let $\operatorname{KP}(w)$ denote the set of all key polynomials for  $w.$ Then for any $\phi\in \operatorname{KP}(w)$ we denote by $[\phi]_w$ the set of all key polynomials which are $w$-equivalent to $\phi.$ For any  $\phi,$ $\phi'\in \operatorname{KP}(w),$ we have 
 $$\phi\mid_{w}\phi'~\text{if and only if}~ \phi\sim_{w}\phi',$$
 and in this case, $\deg\phi=\deg\phi'$ (see  \cite[Proposition 6.6]{EN}).

 Let  $w$ be a valuation on  $K(X)$  which admits key polynomials.
 If $\phi$ is a key polynomial for $w$ of minimal degree, then we define
   $$\deg(w):=\deg \phi.$$  For any valuation $w'$ on $K(X)$ taking values in a subgroup of $\Gamma_{w},$ we say that $$w'\leq w~\text{ if and only if}~ w'(f)\leq w(f),~\forall~f\in K[X].$$   If $w'<w,$ we denote by $\Phi(w',w)$
  the set of all monic polynomials $g\in K[X]$ of minimal degree (say) $d$ such that $w'(g)<w(g).$   We denote  $$\deg(\Phi(w',w))=d.$$
 
 \begin{definition}[\bf Ordinary augmentation]\label{1.1.15}
 	Let $\phi$ be a key polynomial for a valuation $w'$ on $K(X)$ and $\gamma> w'(\phi)$ be an element of a totally ordered abelian group $\Gamma$ containing $\Gamma_{w'}$ as an ordered subgroup. The map $w: K[X]\longrightarrow \Gamma\cup\{\infty\}$ defined by 
 	$$w(f):=\min_{i\geq 0}\{w'(f_i)+i\gamma\},$$ where
 	$f=\sum_{i\geq 0}f_i \phi^i, $ $\deg f_i<\deg \phi,$ is the $\phi$-expansion of $f\in K[X],$  gives a valuation on $K(X)$ (see  \cite[Theorem 4.2]{M}) called the   \emph{ordinary augmentation of $w'$},  and is  denoted  by $w=[w'; \phi,\gamma].$
 \end{definition}
 Note that $w(\phi)=\gamma,$ i.e., $w'(\phi)<w(\phi)$ and
 the polynomial $\phi$ is a key polynomial of minimal degree for the augmented valuation $w$ (see  \cite[Corollary 7.3]{EN}).
 \begin{theorem}[Theorem 1.15, \cite{V}]\label{1.1.19}
 	Let $w$ be a valuation on $K(X)$ and $w'<w.$ Then 
  any $\phi\in \Phi(w',w)$ is a key polynomial for $w'$ and  $$w'<[w';\phi,w(\phi)]\leq w.$$  For any nonzero polynomial $f\in K[X],$ the equality 
 $w'(f)=w(f)$ holds if and only if $\phi\nmid_{w'} f.$
\end{theorem}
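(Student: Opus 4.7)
The plan is to prove the theorem in three successive pieces: first, that any $\phi \in \Phi(w',w)$ is a key polynomial for $w'$; second, that $w' < w'' := [w';\phi,w(\phi)] \leq w$; and third, the equality criterion $w'(f) = w(f)$ if and only if $\phi \nmid_{w'} f$. The single lever behind everything is the following consequence of the minimality of $\deg \phi$ in $\Phi(w',w)$, which I will call \emph{property (P)}: every nonzero $h \in K[X]$ with $\deg h < \deg \phi$ satisfies $w'(h) = w(h)$. I would establish (P) first by noting that any such $h$ with $w'(h) < w(h)$ would, after rescaling to a monic polynomial, contradict the minimal degree in $\Phi(w',w)$, while $w' \leq w$ gives the opposite inequality.

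For the first piece, to prove $w'$-minimality of $\phi$ I would argue by contradiction: if $\phi \mid_{w'} h$ with $0 < \deg h < \deg \phi$, write $h \sim_{w'} \phi q$ with $q \neq 0$, so that $w'(\phi q) = w'(h)$ and $w'(h - \phi q) > w'(h)$. Applying $w$ and using (P), one has $w(h) = w'(h) = w'(\phi q)$ and $w(h - \phi q) > w(h)$, which by the ultrametric forces $w(\phi q) = w(h) = w'(\phi q)$. Expanding
\[w(\phi q) - w'(\phi q) = \bigl(w(\phi) - w'(\phi)\bigr) + \bigl(w(q) - w'(q)\bigr) = 0,\]
both non-negative summands must vanish, contradicting $w'(\phi) < w(\phi)$. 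For $w'$-irreducibility, I would use the $\phi$-adic expansion together with the observation, derived from $w'$-minimality and (P), that $\phi \nmid_{w'} f$ precisely when the minimum $\min_i\{ w'(f_i) + i w'(\phi) \}$ is attained at $i = 0$, thereby reducing irreducibility to a residue-type calculation in which the relevant $\phi^0$-terms have degree $< \deg \phi$ and can be compared via (P).

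For the second piece, $w' < w''$ is immediate from $w''(\phi) = w(\phi) > w'(\phi)$. For $w'' \leq w$, expand $f = \sum_i f_i \phi^i$ with $\deg f_i < \deg \phi$; then property (P) applied termwise gives
\[w''(f) = \min_i\{ w'(f_i) + i\, w(\phi) \} = \min_i\{ w(f_i) + i\, w(\phi) \} \leq w\Bigl(\sum_i f_i \phi^i\Bigr) = w(f).\]
For the third piece, with $\phi$ now a key polynomial for $w'$, the formula $w'(f) = \min_i\{ w'(f_i) + i w'(\phi) \}$ holds, and $\phi \nmid_{w'} f$ is equivalent to this minimum being attained at $i = 0$. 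If $\phi \nmid_{w'} f$, then $w'(f) = w'(f_0) = w(f_0)$ by (P), and every higher-index term satisfies $w(f_i \phi^i) = w'(f_i) + i\, w(\phi) > w'(f_i) + i\, w'(\phi) \geq w'(f)$, so $w(f) = w(f_0) = w'(f)$. If instead $\phi \mid_{w'} f$, the same term-by-term estimate shows all $w(f_i \phi^i)$ strictly exceed $w'(f)$, forcing $w(f) > w'(f)$.

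I expect the main obstacle to be the $w'$-irreducibility step. The $w'$-minimality argument is transparent once (P) is aligned with the ultrametric, and the remaining parts are bookkeeping on $\phi$-expansions. Irreducibility, however, requires manipulating products of $\phi$-expansions modulo $\phi$ and extracting (P)-based comparisons for the reduced-degree factors, which is the most delicate piece of the argument.
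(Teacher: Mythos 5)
The paper gives no proof of this statement at all --- it is quoted from Vaqui\'e (Theorem 1.15 of the cited paper) --- so your attempt can only be measured against the standard arguments in the literature, and most of it holds up. Property (P) is exactly the right lever and your derivation of it is correct. The $w'$-minimality argument via the identity $w(\phi q)-w'(\phi q)=\bigl(w(\phi)-w'(\phi)\bigr)+\bigl(w(q)-w'(q)\bigr)$ is sound, the inequality $[w';\phi,w(\phi)]\leq w$ follows from (P) applied coefficientwise as you say, and the equality criterion in your third piece is a correct ultrametric term-by-term computation, granted the two standard facts you invoke: that $w'$-minimality of $\phi$ is equivalent to $w'(f)=\min_i\{w'(f_i)+i\,w'(\phi)\}$, and that $\phi\mid_{w'}f$ precisely when this minimum is not attained at $i=0$. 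One small omission in the second piece: to conclude $w'<w''$ from $w''(\phi)>w'(\phi)$ you first need $w'\leq w''$, which again comes from the expansion formula, since $w''(f)=\min_i\{w'(f_i)+i\,w(\phi)\}\geq\min_i\{w'(f_i)+i\,w'(\phi)\}=w'(f)$.

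The genuine gap is the $w'$-irreducibility of $\phi$, which you only gesture at (``a residue-type calculation'') and yourself flag as the delicate piece; as written there is no argument there. But you do not need any graded-algebra or residual-polynomial computation: both standard facts underlying your third piece require only $w'$-minimality, not irreducibility, so the equality criterion can be proved before irreducibility, and irreducibility then falls out for free. Indeed, suppose $\phi\nmid_{w'}h$ and $\phi\nmid_{w'}q$. By the equality criterion, $w'(h)=w(h)$ and $w'(q)=w(q)$, hence $w'(hq)=w'(h)+w'(q)=w(h)+w(q)=w(hq)$, and the implication $\phi\mid_{w'}f\Rightarrow w'(f)<w(f)$ (the last sentence of your third piece) forces $\phi\nmid_{w'}hq$. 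Reordering your pieces in this way closes the only real hole in the proposal.
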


 \begin{corollary}[Corollary 2.5, \cite{EN1}]\label{1.1.16}
 	Let $w'<w$ be as above. Then
 	\begin{enumerate}[(i)]
 		\item  $\Phi(w',w )=[\phi]_{w'}$ for all $\phi\in\Phi(w',w ).$
 		\item If $w'<\nu\leq w$ is a chain  of valuations, then $ \Phi(w',w )=\Phi(w',\nu).$
 		In particular, 
 		\begin{align*}
 			w'(f)=w(f) \iff w'(f)=\nu(f),\hspace{2pt}\forall f\in K[X]. 
 		\end{align*}
 	\end{enumerate}
 \end{corollary}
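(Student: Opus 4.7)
The plan is to deduce both parts from Theorem \ref{1.1.19}, using the recalled equivalence that for key polynomials $\phi,\phi'$ of $w'$ one has $\phi \mid_{w'} \phi'$ iff $\phi \sim_{w'} \phi'$, and in that case $\deg\phi = \deg\phi'$.

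For part (i) I would prove the two inclusions of $\Phi(w',w) = [\phi]_{w'}$ separately. For $[\phi]_{w'} \subseteq \Phi(w',w)$, given $\psi \sim_{w'} \phi$ one has $w'(\psi) = w'(\phi) < w(\phi)$ and $w(\psi-\phi) \geq w'(\psi-\phi) > w'(\phi)$, which combine to give $w(\psi) > w'(\psi)$; since $\deg\psi = \deg\phi$ is the minimal degree in $\Phi(w',w)$, $\psi \in \Phi(w',w)$. Conversely, for $\psi \in \Phi(w',w)$, the polynomial $\psi - \phi$ has strictly smaller degree than $\phi$, so minimality of $\deg\phi$ (together with $w' = w$ on $K$) forces $w'(\psi-\phi) = w(\psi-\phi)$. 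A short comparison using $w(\phi) > w'(\phi)$, $w(\psi) > w'(\psi)$, and $w(\psi-\phi) \geq \min\{w(\phi),w(\psi)\}$ rules out $w'(\phi) \neq w'(\psi)$ and then forces $w'(\psi-\phi) > w'(\psi) = w'(\phi)$, i.e.\ $\psi \sim_{w'} \phi$.

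For part (ii) the case $\nu = w$ is trivial, so I assume $w' < \nu < w$ and argue $\Phi(w',w) \subseteq \Phi(w',\nu)$ by contradiction. Fix $\phi \in \Phi(w',w)$ and suppose $\nu(\phi) = w'(\phi)$. Choose $\phi_1 \in \Phi(w',\nu)$; since $w'(\phi_1) < \nu(\phi_1) \leq w(\phi_1)$, applying Theorem \ref{1.1.19} to $w' < w$ with key polynomial $\phi$ and $f = \phi_1$ yields $\phi \mid_{w'} \phi_1$. Both $\phi$ and $\phi_1$ are key polynomials for $w'$, so $\phi \sim_{w'} \phi_1$; in particular $\deg\phi_1 = \deg\phi$ and $\phi_1 \mid_{w'} \phi$. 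Applying Theorem \ref{1.1.19} now to $w' < \nu$ with key polynomial $\phi_1$ and $f = \phi$ forces $w'(\phi) \neq \nu(\phi)$, contradicting the assumption. Hence $w'(\phi) < \nu(\phi)$, placing $\phi$ in $\Phi(w',\nu)$; conversely, any $\phi_1 \in \Phi(w',\nu)$ satisfies $w'(\phi_1) < w(\phi_1)$, and the common minimal degree just obtained puts it in $\Phi(w',w)$. The ``In particular'' clause is then immediate from Theorem \ref{1.1.19} applied with any $\phi \in \Phi(w',w) = \Phi(w',\nu)$ to each of the chains $w' < \nu$ and $w' < w$.

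The step I expect to be the main obstacle is the degree matching in part (ii): a priori one cannot compare $\Phi(w',w)$ and $\Phi(w',\nu)$ without knowing their minimal degrees agree. The double application of Theorem \ref{1.1.19} resolves this by producing a $w'$-equivalence $\phi \sim_{w'} \phi_1$, which automatically equalises the degrees and turns part (i) into the desired identification.
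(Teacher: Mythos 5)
The paper gives no proof of this corollary --- it is quoted directly from \cite{EN1} (Corollary 2.5) --- but your derivation from Theorem \ref{1.1.19} is correct and is essentially the standard argument from that source: both parts reduce to the ``$w'(f)=w(f)\iff\phi\nmid_{w'}f$'' criterion plus the fact that mutually $w'$-dividing key polynomials are $w'$-equivalent of equal degree, which is exactly how you handle the degree-matching issue in (ii). The only simplification available is in the inclusion $\Phi(w',w)\subseteq[\phi]_{w'}$ of part (i): for $\psi\in\Phi(w',w)$ the theorem directly gives $\phi\mid_{w'}\psi$ (since $w'(\psi)<w(\psi)$), hence $\phi\sim_{w'}\psi$ because both are key polynomials for $w'$, so your hands-on ultrametric computation with $\psi-\phi$, while correct, is not needed.
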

 \begin{corollary}\label{1.1.17}
 	If $w=[w';\phi,\gamma]$ is an ordinary augmentation, then $\Phi(w',w )=[\phi]_{w'}.$
 \end{corollary}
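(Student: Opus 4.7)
The plan is to deduce this as an immediate application of Corollary \ref{1.1.16}(i), which says that $\Phi(w', w) = [\phi']_{w'}$ for any $\phi' \in \Phi(w', w)$. Thus, once I exhibit a single element of $\Phi(w', w)$, the conclusion follows. The natural candidate is of course the augmenting key polynomial $\phi$ itself, so the entire task reduces to verifying that $\phi \in \Phi(w', w)$.

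First I would observe that $\phi$ satisfies the inequality $w'(\phi) < w(\phi)$. This is essentially built into the definition of an ordinary augmentation (Definition \ref{1.1.15}): by hypothesis $\gamma > w'(\phi)$, and the remark following the definition records that $w(\phi) = \gamma$. Hence $\phi$ is a monic polynomial with $w'(\phi) < w(\phi)$.

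Next I would verify that $\phi$ has \emph{minimal} degree among all such monic polynomials, which is the one content-bearing step. Let $g \in K[X]$ be a monic polynomial with $\deg g < \deg \phi$. Then the $\phi$-expansion of $g$ consists of the single term $g$ itself, so the augmentation formula
\[
	w(g) = \min_{i \geq 0}\{w'(g_i) + i\gamma\}
\]
collapses to $w(g) = w'(g)$. Consequently no monic polynomial of degree strictly less than $\deg \phi$ can lie in $\Phi(w', w)$, and so $\phi$ realises the minimal degree $\deg(\Phi(w', w)) = \deg \phi$. This places $\phi$ in $\Phi(w', w)$.

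With $\phi \in \Phi(w', w)$ established, Corollary \ref{1.1.16}(i) applies with this specific choice of element and yields $\Phi(w', w) = [\phi]_{w'}$, as desired. The only potential obstacle is the minimality check, but it is settled by a direct inspection of the augmentation formula; the rest is a straight invocation of the previous corollary.
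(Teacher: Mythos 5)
Your proof is correct and is exactly the intended derivation: the paper states this corollary without proof as an immediate consequence of Corollary \ref{1.1.16}(i), and your verification that $\phi\in\Phi(w',w)$ (via $w'(\phi)<\gamma=w(\phi)$ together with $w(g)=w'(g)$ for $\deg g<\deg\phi$, read off from the augmentation formula) supplies precisely the missing minimality check. Nothing further is needed.
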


Consider a totally ordered family of valuations  on $K(X),$ taking  values in a common ordered group $$\mathcal{W}=(\rho_i)_{i\in\mathbf{A}},$$  and  indexed by a totally ordered set $\mathbf{A}.$ We shall always assume that the assignment 
$i\mapsto\rho_i$ is an isomorphism between  totally ordered sets  $\mathbf{A}$ and $\mathcal{W}.$

A polynomial $f$ in $K[X]$ is said to be \emph{$\mathcal{W}$-stable}  if 
$$\rho_i(f)=\rho_{i_0}(f),~ \forall~ i\geq i_0,$$ for some index $i_0\in\mathbf{A}.$ This stable value is denoted by $\rho_\mathcal{W}(f).$
We obtain in this way a \emph{stability function} $\rho_{\mathcal{W}}$ defined only on the set of stable polynomials which is a multiplicatively closed subset of $K[X].$ 

 In view of Corollary \ref{1.1.16} (ii), a polynomial $f\in K[X]$  is \emph{$\mathcal{W}$-unstable} if and only if  
$$\rho_i(f)<\rho_j(f),\hspace{5pt} \forall~ i<j\in\mathbf{A}.$$  
We denote 
 $$m_{\infty}=\min\{\deg f\mid f\in K[X],~\text{$f$ is $\mathcal{W}$-unstable}\}.$$ If  all polynomials are $\mathcal{W}$-stable, then we set $m_{\infty}=\infty$. 
	We say that $\mathcal{W}$ has a \emph{stable limit} if all polynomials in $K[X]$ are $\mathcal{W}$-stable. In this case, $\rho_{\mathcal{W}}$ is a valuation on $K[X],$ and is  called the \emph{stable limit} of $\mathcal{W}.$

\begin{definition}\label{1.1.14}
	Let $w'$ be a valuation on $K(X)$ admitting key polynomials. Then a \emph{continuous family of augmentations} of $w'$ is a family of ordinary augmentations of $w'$  $$\mathcal{W} =(\rho_i=[w';\chi_i,\gamma_i])_{i\in	\mathbf{A}},$$ indexed by a totally ordered set $\mathbf{A}$ such that $\gamma_i<\gamma_j$ for all $i<j$ in $\mathbf{A},$ satisfying the following conditions:
	\begin{enumerate}[(i)]
		\item The set $\mathbf{A}$  has no last element.
		\item All  key polynomials $\chi_i\in \operatorname{KP}(w')$ have the same degree.
		\item For all $i<j$ in $\mathbf{A},$ $\chi_j$ is a  key polynomial for $\rho_i,$ 
		$\chi_j\not\thicksim_{\rho_i}\chi_i~\text{and}~ \rho_j=[\rho_i;\chi_j,\gamma_j].$
	\end{enumerate}
\end{definition}
	The common degree $\deg\chi_i,$ for all $i,$ is called the \emph{stable degree} of the family $\mathcal{W}$ and is denoted by $\deg (\mathcal{W}).$
\begin{remark}\label{1.1.18}
The following properties hold for any  continuous family $\mathcal{W} =(\rho_i)_{i\in\mathbf{A}}$ of augmentations (see p.\ 9, \cite{EN1}):
\begin{enumerate}[(i)]
	\item The mapping defined by $i\mapsto\gamma_i$ and $i\mapsto\rho_i$ are isomorphisms of ordered sets between $\mathbf{A}$ and $\{\gamma_i\mid i\in\mathbf{A}\},$ $\{\rho_i\mid i\in\mathbf{A}\},$ respectively.
	\item For all $i\in\mathbf{A},$ $\chi_i$ is a key polynomial for $\rho_i$ of minimal degree.
	
	\item For all $i, j\in\mathbf{A},$ $\rho_i(\chi_j)=\min\{\gamma_i,\gamma_j\}.$ Hence, all the polynomials $\chi_i$ are stable.
	\item $\Phi(\rho_i,\rho_j)=[\chi_j]_{\rho_i},$ $\forall$ $i<j\in\mathbf{A}.$
	\item All valuations $\rho_i$ are residually transcendental.
	\item All the value groups $\Gamma_{\rho_i}$ coincide and   the common value group is denoted by $\Gamma_{\mathcal{W}}.$ 
\end{enumerate}
\end{remark}
\begin{remark}
 Since a totally ordered set admits a well-ordered cofinal subset, so without loss of generality  we can assume that $\mathbf{A}$ is  well-ordered.
\end{remark}
\begin{definition}[\bf MacLane-Vaqui\'e limit key polynomials]
	Let $\mathcal{W}$ be a continuous family of augmentations of a valuation $w'.$ Then
 a   monic $\mathcal{W}$-unstable polynomial of minimal degree  is called a \emph{MacLane-Vaqui\'e  limit key polynomial} (abbreviated as MLV) for $\mathcal{W}.$
\end{definition}
  The set of all MLV limit key polynomials is denoted by $\operatorname{KP}_{\infty}(\mathcal{W}).$  Since the product of stable polynomials is stable, so all MLV limit key polynomials are irreducible in $K[X].$ 

 	Any continuous family $\mathcal{W}$ of augmentations of $w'$ fall in one of the following three cases:
 	\begin{enumerate}[(i)]
 		\item  It has a stable limit,  i.e., $\rho_{\mathcal{W}}$ is a valuation on $K[X],$ if $m_{\infty}=\infty.$ 
 		\item It is \emph{in-essential} if $m_{\infty}=\deg(\mathcal{W})$ (stable degree).
 		\item It is  \emph{essential }  if $\deg(\mathcal{W})<m_{\infty}<\infty.$
 	\end{enumerate}

Let $\mathcal{W}$ be an essential continuous family of augmentations of a valuation $w'.$ Then 	$\mathcal{W}$ admit MLV limit key polynomials.
If $Q$ is an MLV limit key polynomial, then any polynomial $f$ in $K[X]$ with $\deg f<\deg Q$ is  $\mathcal{W}$-stable.
\begin{definition}[\bf Limit augmentation]
	Let   $Q$ be any MLV limit key polynomial for an essential continuous family $\mathcal{W}  =(\rho_i)_{i\in\mathbf{A}}$ of augmentations of $w'$   and   $\gamma>\rho_i(Q),$ for all $i\in\mathbf{A},$ be an element of  a totally ordered abelian group $\Gamma$  containing $\Gamma_{\mathcal{W}}$ as an  ordered subgroup. Then the map $w: K[X]\longrightarrow\Gamma\cup\{\infty\}$ defined by
	$$w(f):=\min_{i\geq 0}\{\rho_\mathcal{W}(f_i)+i\gamma\},$$
where  $f=\sum_{i\geq 0} f_iQ^i,$ $\deg f_i<\deg Q,$ is the $Q$-expansion of $f\in K[X],$ gives a valuation on $K(X)$ and is called  the \emph{limit augmentation} of $\mathcal{W},$  denoted by $w=[\mathcal{W}=(\rho_i)_{i\in\mathbf{A}}; Q, \gamma].$ 
\end{definition}
 Note that $w(Q)=\gamma$ and $\rho_i<w$ for all $i\in\mathbf{A}.$ Also,  $Q$ is a key polynomial for $w$ of minimal degree  \cite[Corollary 7.13]{EN}.
\vspace{.20pt}

We now recall the definition of MacLane-Vaqui\'e chains given by  Nart in \cite{EN1}. For this,  we first
consider a finite, or countably infinite, chain of mixed augmentations
\begin{align}\label{1.12}
	w_0\xrightarrow{\phi_1,\gamma_1} w_1\xrightarrow{\phi_2,\gamma_2}\cdots \longrightarrow w_{n}\xrightarrow{\phi_{n+1},\gamma_{n+1}} w_{n+1}\longrightarrow\cdots
\end{align}
in which every valuation is an augmentation of the previous one and is of one of the following type:
\begin{itemize}
	\item  Ordinary augmentation: $w_{n+1}=[w_n; \phi_{n+1},\gamma_{n+1}],$ for some $\phi_{n+1}\in \operatorname{KP}(w_n).$ 
	\item Limit augmentation:  $w_{n+1}=[\mathcal{W}_n; \phi_{n+1},\gamma_{n+1}],$ for some $\phi_{n+1}\in \operatorname{KP}_{\infty}(\mathcal{W}_n),$ where $\mathcal{W}_n$ is an essential continuous family of augmentations of $w_n.$ 
\end{itemize}
Let $\phi_0\in \operatorname{KP}(w_0)$ be a key polynomial of minimal degree and let $\gamma_0=w_0(\phi_0).$
 Then, in view of  Theorem \ref{1.1.19},  Proposition 6.3 of \cite{EN}, Proposition 2.1, 3.5 of \cite{EN1} and Corollary \ref{1.1.16},   a  chain (\ref{1.12}) of augmentations have the following properties:
 \begin{remark}\label{1.1.20}
\begin{enumerate}[(i)]
	\item  $\gamma_n=w_n(\phi_n)<\gamma_{n+1}.$
	\item For all $n\geq 0,$  the polynomial $\phi_n$ is a key polynomial for $w_n$ of minimal degree and therefore
	$$\deg(w_n)(=\deg \phi_n)~\text{divides}~\deg (\Phi(w_n,w_{n+1})).$$
	\item \begin{equation*}
		\Phi(w_n,w_{n+1})=
		\begin{cases}
			[\phi_{n+1}]_{w_n},&\text{if  $w_n\rightarrow w_{n+1}$ is an ordinary augmentation}\\
		\displaystyle\Phi_{w_n,\mathcal{W}_n}=[\chi_i]_{w_n},&\text{if  $w_n\rightarrow w_{n+1}$ is a limit augmentation}.
		\end{cases}
	\end{equation*}
	\item \begin{equation*}
			\deg(\Phi(w_n,w_{n+1}))=
			\begin{cases}
				\deg\phi_{n+1},&\text{if}~ w_n\rightarrow w_{n+1}~ \text{is an ordinary augmentation}\\
				\displaystyle\deg(\mathcal{W}_n),&\text{if}~ w_n\rightarrow w_{n+1} ~\text{is a limit augmentation}.
			\end{cases}
	\end{equation*}
\end{enumerate}
\end{remark}
A valuation $w$ on $K(X)$
  is called a \emph{depth zero valuation} if 
  $w=w_{\alpha,\delta},$ for some $ \alpha \in K$ and  $\delta\in\Gamma,$  where $w_{\alpha,\delta}$ is a valuation on $K[X]$  defined by
  $$w_{\alpha,\delta}\left(\sum_{i\geq 0} c_i (X-\alpha)^i\right):=\min_{i\geq 0}\{v(c_i)+i\delta\}, \, c_i\in K.$$
\begin{definition}[\bf MacLane-Vaqui\'e chains]\label{1.1.13}
	A finite, or countably infinite chain of mixed augmentations as in (\ref{1.12}) is called a \emph{MacLane-Vaqui\'e  chain} (abbreviated as MLV chain), if every augmentation step satisfies:
	\begin{enumerate}[(i)]
		\item if $w_n\rightarrow w_{n+1}$ is an ordinary augmentation, then $\deg (w_n)<\deg(\Phi(w_n,w_{n+1})).$
		\item if $w_n\rightarrow w_{n+1}$ is a limit augmentation, then $\deg (w_n)=\deg(\Phi(w_n,w_{n+1}))$ and $\phi_n\notin\Phi(w_n,w_{n+1}).$
	\end{enumerate}
	A   MacLane-Vaqui\'e  chain is said to be \emph{complete} if  $w_0$ is a  depth zero valuation.
\end{definition}
\begin{remark}\label{1.1.10}
	Every infinite MLV chain (\ref{1.12}) has a stable limit.  Since  for any polynomial $f\in K[X],$  there exist some $n\geq 0$ such that $\deg f<\deg \phi_n$ and as $\deg\phi_n=\deg(w_n)\leq \deg (\Phi(w_n, w_{n+1})),$ so by Theorem \ref{1.1.19},  $w_n(f)=w_{n+1}(f).$  Therefore $\rho_{\mathcal{W}}$ is the stable limit of the ordered family of valuations $\mathcal{W}=(w_n)_{n\in I},$ where $I=\mathbb{N}\cup\{0\}.$ 
\end{remark}

In Theorem  3.1 of \cite{SA},  given an induced complete sequence of ABKPs, $\{Q_i\}_{i\in\Delta}$  for   $w$ such that $\Delta$ has a last element a precise complete finite MLV chain of $w$ is obtained, and conversely if $w$ has a  complete finite MLV chain, then Theorem 3.2 of \cite{SA} gives a construction of an induced complete sequence of ABKPs of the above type. Suppose now that $w$ has an induced complete sequence of ABKPs such that $\Delta$ has no last element. In the following result, using such a complete sequence we give an explicit construction of an MLV chain of $w.$ 
\begin{theorem}\label{1.1.2}
	Let $(K,v)$ be a valued field and let $w$ be an extension of $v$ to $K(X).$ If $\{Q_i\}_{i\in\Delta}$ is an induced complete sequence of ABKPs for $w$  such that $\Delta$ has no last element, then $w$ falls in exactly one of the following two cases. 
	\begin{enumerate}[(i)]
		\item After a finite number, say, $r$ of augmentation steps, it is the stable limit of a continuous family ${\mathcal{W}_r}$ of augmentations of $w_r$
		\begin{align*}
			w_0\xrightarrow{Q_1,\gamma_1} w_1\xrightarrow{Q_2,\gamma_2}\cdots \longrightarrow w_{r-1}\xrightarrow{Q_r,\gamma_r} w_r\xrightarrow {{\mathcal{W}_r}} \rho_{\mathcal{W}_r}=w,
		\end{align*}
	such that $\deg(\Phi(w_r,w))=\deg(w_r)$ and $Q_r\notin \Phi(w_r, w).$
	\item It is the stable limit of a complete infinite MLV chain. 
	\begin{align*}
		w_0\xrightarrow{Q_1,\gamma_1} w_1\xrightarrow{Q_2,\gamma_2}\cdots \longrightarrow w_{n}\xrightarrow{Q_{n+1},\gamma_{n+1}} w_{n+1}\longrightarrow\cdots
	\end{align*}
	\end{enumerate}
 In both cases, $\gamma_j=w(Q_j)$ for all $j\in I.$ Also, an augmentation $w_j\longrightarrow w_{j+1}$ is ordinary if and only if $\vartheta_{j}=\emptyset.$
\end{theorem}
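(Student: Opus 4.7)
The plan is to define valuations $w_j := w_{Q_j}$ (the $Q_j$-truncations of $w$) for each $j \in I$ and assemble them into the desired chain. Since $\Delta$ has no last element, either $I = \mathbb{N}\cup\{0\}$ is infinite (targeting case (ii)), or $I = \{0,1,\ldots,N\}$ is finite with $\vartheta_N$ nonempty and without last element (targeting case (i) with $r = N$). In either scenario, $\deg Q_0 = 1$ makes $w_0$ a depth-zero valuation, so any resulting MLV chain is automatically complete.

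For each $j\geq 1$ I would analyze the transition $w_{j-1}\to w_j$ according to whether $\vartheta_{j-1}$ is empty. When $\vartheta_{j-1} = \emptyset$, the goal is to show that $Q_j$ is a key polynomial for $w_{j-1}$ with $w_{j-1}(Q_j)<w(Q_j)$ and that $w_j = [w_{j-1};Q_j,\gamma_j]$ for $\gamma_j = w(Q_j)$; the MLV condition $\deg(w_{j-1})<\deg(\Phi(w_{j-1},w_j))$ then follows from $\deg Q_{j-1}<\deg Q_j$ combined with Corollary \ref{1.1.17}. When $\vartheta_{j-1}\neq\emptyset$, I would form the family $\mathcal{W}_{j-1}=(w_{Q_i})_{i\in\vartheta_{j-1}}$; Remark \ref{1.1.3} ensures the $Q_i$ ($i\in\vartheta_{j-1}$) share the degree $\deg Q_{j-1}$ and have strictly increasing $w$-values, so verifying the axioms of Definition \ref{1.1.14} reduces to checking that, for $i<i'$ in $\vartheta_{j-1}$, $Q_{i'}$ is a key polynomial for $w_{Q_i}$, not $w_{Q_i}$-equivalent to $Q_i$, and $w_{Q_{i'}} = [w_{Q_i};Q_{i'},w(Q_{i'})]$. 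Since $Q_j$ has strictly larger degree than every $\chi_i$ and is $\mathcal{W}_{j-1}$-unstable while every polynomial of smaller degree is stable (using completeness applied below degree $\deg Q_j$), it qualifies as an MLV limit key polynomial, and $w_j = [\mathcal{W}_{j-1};Q_j,\gamma_j]$ follows from the $Q_j$-expansion formula.

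The MLV-chain condition at limit steps reduces to $\deg(\Phi(w_{j-1},w_j)) = \deg(\mathcal{W}_{j-1}) = \deg Q_{j-1} = \deg(w_{j-1})$ via Remark \ref{1.1.20}, together with $Q_{j-1}\notin\Phi(w_{j-1},w_j)$, which holds because $\delta(Q_{j-1})<\delta(Q_i)$ for every $i\in\vartheta_{j-1}$ places $Q_{j-1}$ in a $w_{j-1}$-equivalence class distinct from that of the $\chi_i$. To finish case (a), apply Remark \ref{1.1.10} to get the stable limit $\rho$ of the chain; completeness of $\{Q_i\}_{i\in\Delta}$ then gives $\rho = w$, since for any $f\in K[X]$ one may pick $Q_i$ with $\deg Q_i\leq\deg f$ and $w_{Q_i}(f)=w(f)$ and verify $w_n(f)=w(f)$ for all sufficiently large $n$. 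In case (b), the family $\mathcal{W}_r=(w_{Q_i})_{i\in\vartheta_N}$ of augmentations of $w_r$ admits $w$ as stable limit by the same completeness argument, and the relations $\deg(\Phi(w_r,w))=\deg(w_r)$, $Q_r\notin\Phi(w_r,w)$ follow exactly as in the limit-step verification above.

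The principal obstacle will be the careful bookkeeping of the continuous-family axioms at each limit step—specifically, proving within $\vartheta_{j-1}$ that each $Q_{i'}$ is genuinely a key polynomial for $w_{Q_i}$ with the correct augmentation relation $w_{Q_{i'}} = [w_{Q_i};Q_{i'},w(Q_{i'})]$, and that $Q_j$ is $\mathcal{W}_{j-1}$-unstable of minimal degree. These technical checks rest on the strict orderings of $\delta$- and $w$-values in Remark \ref{1.1.3}\,(v) together with the fact that $w_{Q_i}$ agrees with $w$ on polynomials of degree strictly less than $\deg Q_j$. Once these are in place, the equality $\gamma_j=w(Q_j)$ and the ordinary-versus-limit dichotomy governed by $\vartheta_j=\emptyset$ emerge immediately from the construction.
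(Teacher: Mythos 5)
Your proposal is correct and follows essentially the same route as the paper: take $w_j=w_{Q_j}$, split on whether $I$ is finite (with $\vartheta_N\neq\emptyset$) or infinite, read off the ordinary/limit dichotomy from $\vartheta_j=\emptyset$ or not, and identify the stable limit with $w$ via completeness together with Proposition \ref{2.1.6}\,(ii). The only difference is presentational: the paper delegates the per-step augmentation verifications to Lemma \ref{2.1.5} and the argument of Theorem 3.1 of \cite{SA} (and gets $Q_{j-1}\notin\Phi(w_{j-1},w_j)$ more directly from $w_{Q_{j-1}}(Q_{j-1})=w(Q_{j-1})$), whereas you sketch those checks inline.
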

The converse of the above result also holds.
\begin{theorem}\label{1.1.4}
	Let $(K,v)$ be a valued field and let $w$ be an extension of $v$ to $K(X)$ such that $w$ falls in exactly one of the following two cases.
	\begin{enumerate}[(i)]
		\item  After a finite number, say, $r$ of augmentation steps, it is the stable limit of a continuous family $\mathcal{W}_r=(\rho_i)_{i\in\mathbf{A}_r}$ of augmentations of $w_r$
		\begin{align}\label{1.1}
			w_0\xrightarrow{\phi_1,\gamma_1} w_1\xrightarrow{\phi_2,\gamma_2}\cdots \longrightarrow w_{r-1}\xrightarrow{\phi_r,\gamma_r} w_r\xrightarrow {(\rho_i)_{i\in\mathbf{A}_r}} \rho_{\mathcal{W}_r}=w
		\end{align}
		such that $\deg(\Phi(w_r,w))=\deg(w_r)$ and $\phi_r\notin \Phi(w_r, w).$\\
		
		\item  It is the stable limit of a complete infinite MLV chain, 
		\begin{align}\label{1.2}
			w_0\xrightarrow{\phi_1,\gamma_1} w_1\xrightarrow{\phi_2,\gamma_2}\cdots \longrightarrow w_{n}\xrightarrow{\phi_{n+1},\gamma_{n+1}} w_{n+1}\longrightarrow\cdots
		\end{align}
	\end{enumerate}
	 Then, there is a totally ordered set $\Delta$ containing no last element and an induced complete sequence
	  $\Lambda=\{Q_i\}_{i\in\Delta}$ of ABKPs for $w,$ constructed as follows:
	    \begin{enumerate}[(a)]
	    	\item $\Delta=\bigcup_{j\in I}\Delta_{j},$ with $I=\{0,1,\ldots,r\}$ in case (i) and $I=\mathbb{N}\cup\{0\}$ in case (ii).
	    	\item $\Delta_{j}=\{j\}\cup\vartheta_{j},$ for all $j\in I,$ and $\vartheta_{j}=\emptyset$ if and only if the augmentation $w_j\longrightarrow w_{j+1}$ is ordinary. Moreover, $Q_j=\phi_j$ for all $j\in I.$
	    	\item Suppose that $w_j\longrightarrow w_{j+1}$ is a limit augmentation, or (in case (i)) $j=r$ and $w_j\longrightarrow w$ is a stable limit step. Let $\mathcal{W}_j=(\rho_{i})_{i\in\mathbf{A}_j}$ be the underlying  totally ordered family. Then $\vartheta_{j}=\mathbf{A}_j$ and $Q_i=\chi_i$ for all $i\in\mathbf{A}_j,$ where  $\chi_i$ is the key polynomial for $w_j$ such that $\rho_i=[w_j;\chi_i, w(\chi_i)].$
	    \end{enumerate}
\end{theorem}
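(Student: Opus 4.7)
The plan is to define $\Lambda = \{Q_i\}_{i\in\Delta}$ exactly as prescribed in (a)--(c), equip $\Delta = \bigcup_j \Delta_j$ with the order making $j$ the minimum of $\Delta_j = \{j\}\cup\vartheta_j$ and $\vartheta_j = \mathbf{A}_j$ (taken well-ordered and cofinal, as noted just after Remark~\ref{1.1.18}) inherit its own order, and then verify the three conditions of Definition~\ref{1.1.12} together with properties (i)--(v) of Remark~\ref{1.1.3}. Several structural items are immediate from the setup: properties (i) and (iii) of Remark~\ref{1.1.3} are built into the ordering; property (ii) uses $\phi_0 = X-\alpha_0$ since $w_0$ is depth zero; the degree pattern in (iv) follows from Definition~\ref{1.1.13}(i) across ordinary steps and from $\deg\chi_i = \deg(w_j) < \deg\phi_{j+1}$ across limit steps (and analogously for the final family in case (i)).

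The first substantive step is to compute $w(Q_i) = \gamma_i$ for every $i\in\Delta$, which yields the $w$-monotonicity half of Remark~\ref{1.1.3}(v). For $Q_j = \phi_j$, I trace stability along the remainder of the chain: each subsequent ordinary step $w_n\to w_{n+1}$ preserves $w_n(\phi_j)$ because $\deg\phi_{n+1}>\deg\phi_n\geq\deg\phi_j$, so Theorem~\ref{1.1.19} applies; each subsequent limit step also preserves it because either $n>j$ (so $\deg\chi_i=\deg\phi_n>\deg\phi_j$ makes $\chi_i\nmid_{w_n}\phi_j$ trivial) or $n=j$ (so $\phi_j\notin[\chi_i]_{w_j}$ by Definition~\ref{1.1.13}(ii)). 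Taking stable limits via Remark~\ref{1.1.10} in case (ii), or using the stable-limit definition in case (i), yields $w(\phi_j)=\gamma_j$. For $Q_i=\chi_i$ with $i\in\mathbf{A}_j$, Remark~\ref{1.1.18}(iii) gives $\rho_{i'}(\chi_i)=\gamma_i$ for all $i'\geq i$, and the same chain-stability argument extends this to $w(\chi_i)=\gamma_i$.

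The heart of the proof is showing each $Q_i$ is an ABKP for $w$ and that $\delta(Q_i)$ is strictly increasing along $\Delta$. For $Q_j=\phi_j$, Remark~\ref{1.1.20}(ii) makes $\phi_j$ a key polynomial of minimal degree for $w_j$, and standard results (Proposition~2.10 of~\cite{Ma}) identify such a polynomial as an ABKP for $w_j$, giving $\delta_{w_j}(f)<\delta_{w_j}(\phi_j)$ for every $f$ with $\deg f<\deg\phi_j$. The stability analysis of the previous paragraph in fact gives $w(f)=w_j(f)$ for every such $f$. From this I select a common extension $\overline{w}$ of $w$ that restricts to a chosen common extension of $w_j$ on the subalgebra of $\overline{K}(X)$ generated by algebraic elements whose minimal polynomials have degree less than $\deg\phi_j$; since $\delta(f)$ is independent of the choice of common extension by~\cite[Proposition~3.1]{JN}, the ABKP inequality transports to $\delta_w(f)<\delta_w(\phi_j)$. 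The same argument with $\rho_i$ in place of $w_j$ handles $\chi_i$. Strict monotonicity of $\delta$ along $\Delta$ follows by combining the $w$-monotonicity with the non-equivalence clauses in Definitions~\ref{1.1.14}(iii) and~\ref{1.1.13}(ii).

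Completeness (Definition~\ref{1.1.12}(iii)) is the last step: given $f\in K[X]$, in case (ii) pick the least $n$ with $\deg\phi_{n+1}>\deg f$ (possible since degrees tend to infinity), and take $Q=\phi_n$ if $\vartheta_n=\emptyset$, or else an index $i\in\vartheta_n$ large enough that $\rho_i(f)$ equals the $\mathcal{W}_n$-stable value of $f$ (which exists by Remark~\ref{1.1.18}(iii)); case (i) is analogous using $\mathcal{W}_r$ directly. In both cases $\deg Q\leq\deg f$ and $w_Q(f)=w(f)$ follows from Theorem~\ref{1.1.19} applied to the remainder of the chain. The principal obstacle is the upgrade in the previous paragraph: passing from ABKP for $w_j$ (or $\rho_i$) to ABKP for $w$ requires a Galois-equivariant stability statement for the $\overline{w}$-values of linear factors $X-\alpha\in\overline{K}[X]$, going beyond the $K[X]$-level stability available from Theorem~\ref{1.1.19} and constituting the main technical point of the proof.
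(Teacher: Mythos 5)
There is a genuine gap, and you have in fact flagged it yourself: the passage from ``$\phi_j$ (resp.\ $\chi_i$) is an ABKP for $w_j$ (resp.\ $\rho_i$)'' to ``$Q_i$ is an ABKP for $w$'' is the crux of the theorem, and your proposed mechanism for it does not work as stated. The quantity $\delta(f)$ for the valuation $w_j$ is computed from a common extension of $w_j$ and $\bar v$ to $\overline{K}(X)$, while $\delta(f)$ for $w$ is computed from a common extension of $w$ and $\bar v$; these are different valuations on $\overline{K}(X)$, and there is no a priori reason that a common extension $\overline{w}$ of $w$ can be chosen to agree with a chosen common extension of $w_j$ on the relevant linear factors $X-\alpha$. \cite[Proposition 3.1]{JN} only says $\delta$ is independent of the choice of common extension \emph{for a fixed valuation of $K(X)$}; it does not let you compare $\delta$-values across the two different valuations $w_j<w$. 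You correctly call this ``the main technical point of the proof,'' but you do not supply it, so the central claims --- that each $Q_i$ is an ABKP for $w$, that $\delta(Q_i)$ is strictly increasing along $\Delta$, and (implicitly needed for completeness) that the truncation $w_{Q_i}$ coincides with the chain valuation $w_j$ or $\rho_i$ --- remain unproved.

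The paper closes this gap not by any $\overline{K}(X)$-level argument but by quoting Theorems 2.17 and 2.18 of \cite{Ma} (Theorems \ref{2.1.2} and \ref{2.1.7} here): a key polynomial $\phi$ for $w'<w$ is an ABKP for $w$ if and only if either $\phi\in\Phi(w',w)$, or $\phi\notin\Phi(w',w)$ and $\deg\phi=\deg(w')$; and in these cases $w_\phi=[w';\phi,w(\phi)]$ or $w_\phi=w'$ respectively. Combined with Corollary \ref{1.1.16}(ii) (which converts $\Phi(w_j,w_{j+1})$ into $\Phi(w_j,w)$) and Proposition \ref{2.1.6}(iv), this yields Lemmas \ref{1.1.11}, \ref{2.1.1} and Remark \ref{2.1.9}, which deliver simultaneously the ABKP property for $w$, the identifications $w_j=w_{\phi_j}$ and $\rho_i=w_{\chi_i}$, and the strict $\delta$-monotonicity. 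If you want to repair your argument, you should replace the common-extension construction with an appeal to these results. A secondary, smaller issue: in the completeness step your phrase ``$w_Q(f)=w(f)$ follows from Theorem \ref{1.1.19}'' silently uses the identification of the truncation $w_Q$ with the chain valuation (again the content of Theorems \ref{2.1.2}/\ref{2.1.7}), and in the case $\vartheta_j\neq\emptyset$ with $\deg Q_i<\deg f<\deg Q_{j+1}$ one must actually rule out the possibility $\delta(f)>\delta(Q_i)$ for all $i\in\Delta_j$ (the paper does this by showing it would force $\deg f\geq\deg Q_{j+1}$); your sketch does not address this case.
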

It is known that if  $\{Q_i\}_{i\in\Delta}$ is a complete sequence of ABKPs for $w,$ then $w$ is a valuation-transcendental extension of $v$ to $K(X)$ if and only if $\Delta$ has a last element, say, N, and then 	 $w=w_{Q_N}$ (see  \cite[Theorem 5.6]{MMS}). Therefore, as an immediate consequence of Theorems \ref{1.1.2} and \ref{1.1.4} we have the following result.
\begin{corollary}\label{1.1.9}
	Let $(K,v)$ and $(K(X),w)$ be as above. Then the following are equivalent:
	\begin{enumerate}[(i)]
		\item The extension $w$ is valuation-algebraic.
		\item There exists an induced complete sequence  $\{Q_i\}_{i\in\Delta}$ of ABKPs for $w$ such that $\Delta$ has no last element.
		\item  The extension $w$ has an MLV chain of type (\ref{1.1}) or (\ref{1.2}).
	\end{enumerate}
\end{corollary}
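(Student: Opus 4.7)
The plan is to derive this corollary as a direct synthesis of the two main theorems with the MMS characterization cited just before its statement. Since the claim is a triple equivalence, I would structure the proof around two separate chains: (i) $\Leftrightarrow$ (ii) and (ii) $\Leftrightarrow$ (iii).

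For (i) $\Leftrightarrow$ (ii), I would invoke \cite[Theorem 5.6]{MMS}, which asserts that for any complete sequence $\{Q_i\}_{i\in\Delta}$ of ABKPs for $w$, the extension $w$ is valuation-transcendental if and only if $\Delta$ has a last element $N$, in which case $w=w_{Q_N}$. Because every extension $w$ of $v$ to $K(X)$ is either valuation-algebraic or valuation-transcendental and these conditions are mutually exclusive, negating this equivalence immediately gives (i) $\Leftrightarrow$ (ii). I would also note, for clarity, that although the indexed family $\{Q_i\}_{i\in\Delta}$ itself is not uniquely determined by $w$, the order type of $I$ is intrinsic to $w$ (as recorded right after Remark~\ref{1.1.3}), so having \emph{some} induced complete sequence with $\Delta$ lacking a last element is equivalent to having this property for \emph{every} such sequence.

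For (ii) $\Leftrightarrow$ (iii), each direction is packaged in one of the preceding theorems. The implication (ii) $\Rightarrow$ (iii) is precisely Theorem~\ref{1.1.2}: starting from an induced complete sequence of ABKPs whose $\Delta$ has no last element, it exhibits $w$ either as the stable limit of a continuous family built on top of a finite MLV chain, that is, a chain of the form (\ref{1.1}), or as the stable limit of a complete infinite MLV chain of the form (\ref{1.2}). The converse (iii) $\Rightarrow$ (ii) is Theorem~\ref{1.1.4}: from any chain of type (\ref{1.1}) or (\ref{1.2}) it produces an explicit induced complete sequence $\{Q_i\}_{i\in\Delta}$ of ABKPs for $w$, and by construction (b)-(c) in that theorem the final block $\Delta_r$ (in case (\ref{1.1})) or the union of infinitely many $\Delta_j$ (in case (\ref{1.2})) leaves $\Delta$ without a last element.

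The proof is essentially bookkeeping, so there is no real technical obstacle: the only delicate point is the appeal to the dichotomy between valuation-algebraic and valuation-transcendental extensions needed to convert the MMS equivalence into the (i) $\Leftrightarrow$ (ii) step; all the geometric content sits inside Theorems~\ref{1.1.2} and~\ref{1.1.4}, which have already been established.
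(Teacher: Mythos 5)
Your proposal is correct and matches the paper's intended argument: the paper derives this corollary exactly as you do, citing \cite[Theorem 5.6]{MMS} (negated via the valuation-algebraic/valuation-transcendental dichotomy) for (i) $\Leftrightarrow$ (ii) and Theorems \ref{1.1.2} and \ref{1.1.4} for (ii) $\Leftrightarrow$ (iii). Your added remark that the order type of the index set is intrinsic to $w$ is a sensible clarification of the ``there exists'' formulation in (ii), but it introduces nothing beyond what the paper already records after Remark \ref{1.1.3}.
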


	\section{Preliminaries}
	Let $(K,v)$ be a valued field and  $(\overline{K},\bar{v})$ be as before. Let $w$ be an extension of $v$ to $K(X)$  and $\overline{w}$  a common extension of $w$ and $\bar{v}$ to $\overline{K}(X).$ In this section we give some preliminary results which will be used to prove the main results. 
	
	We first recall some basic properties of ABKPs for  $w$   (see Lemma 2.11 of \cite{NS} and  Proposition 3.8,  Corollary  3.11,  Theorem 6.1 of  \cite{JN1}).
	\begin{proposition}\label{2.1.6}
		For  ABKPs, $Q$ and $Q'$ for $w$ the following holds:
		\begin{enumerate}[(i)]
				\item If $\delta(Q)<\delta(Q'),$ then $w_{Q}(Q')<w(Q').$
			\item Suppose that  $\delta(Q)<\delta(Q').$ For any polynomial $f\in K[X],$ we have 
			\begin{align*}
				w_{Q}(f)=w(f)&\implies w_{Q'}(f)=w(f),\\
				w_{Q'}(f)<w(f)&\implies w_Q(f)<w_{Q'}(f).
			\end{align*}	 
			\item If $Q'\in \Phi(w_Q,w),$ then $Q$ and $Q'$ are key polynomials for $w_Q.$ Moreover, $w_{Q'}=[w_Q; Q', w(Q')].$
			\item 	 Every $F\in\Phi(w_Q,w)$ is an ABKP for $w$ and $\delta(Q)<\delta(F).$  
		\end{enumerate}
	\end{proposition}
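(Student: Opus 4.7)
The plan is to prove the four parts in order, with (i) providing the foundational inequality $w_Q < w$ on $Q'$ and each subsequent part reducing to it together with the already-stated augmentation machinery. Throughout, the central tool is the equivalence between the truncation valuation $w_Q$ behaving like $w$ on a polynomial $f$ and $f$ being ``unseen'' by $Q$, measured via the $\delta$-function applied to roots of $f$ in $\overline K$.

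For part (i), I would write the $Q$-expansion $Q'=\sum_i f_iQ^i$ with $\deg f_i<\deg Q$, choose a root $\alpha'$ of $Q'$ realizing $\delta(Q')=\overline w(X-\alpha')$, and compare the $\overline w$-valuation of each term $f_iQ^i$ evaluated at $\alpha'$. Because $\overline w(X-\alpha')>\delta(Q)\geq\overline w(X-\beta)$ for every root $\beta$ of $Q$, each factor $\alpha'-\beta$ has $\overline w$-valuation exactly $\overline w(X-\beta)$; this pins down $\overline w(Q(\alpha'))$ in a way that does not match what $w(Q')$ would require, forcing the minimum $\min_i w(f_iQ^i)$ to be strictly less than $w(Q')$. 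The point is that $w(Q')$ reflects the factor $X-\alpha'$ contributing $\delta(Q')$, whereas $w_Q(Q')$ can only ``see'' contributions of size at most $\delta(Q)$ from the factor $Q^i$ and the lower-degree coefficients behave $w_Q$-like-$w$.

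For (ii), the first implication follows once one establishes $w_Q\leq w_{Q'}\leq w$: the inequality $w_Q\leq w_{Q'}$ is a monotonicity for $\delta$-ordered truncations that follows by applying (i) to the $(Q')$-expansion of a polynomial and re-expanding each piece in $Q$. Then $w_Q(f)=w(f)$ sandwiches $w_{Q'}(f)=w(f)$. The second implication is the sharpened contrapositive: if $w_{Q'}(f)<w(f)$, some monomial $g(Q')^k$ in the $Q'$-expansion is strictly underestimated by $w_{Q'}$, and re-expanding that piece in $Q$ and invoking (i) on the $(Q')^k$ contribution yields the strict drop $w_Q(f)<w_{Q'}(f)$.

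For (iii), if $Q'\in\Phi(w_Q,w)$ then Theorem \ref{1.1.19} immediately gives that $Q'$ is a key polynomial for $w_Q$ and $w_{Q'}=[w_Q;Q',w(Q')]$, so the only remaining point is that $Q$ itself is a key polynomial for $w_Q$; this is exactly the remark in the paper following the definition of $Q$-truncation, since any ABKP $Q$ for $w$ is an ABKP for $w_Q$, hence a key polynomial of minimal degree for $w_Q$. For (iv), take $F\in\Phi(w_Q,w)$. To prove $F$ is an ABKP for $w$, pick $f$ with $\deg f<\deg F$; minimality of $\deg F$ in $\Phi(w_Q,w)$ gives $w_Q(f)=w(f)$, and translating this via the $\delta$-characterization yields $\delta(f)<\delta(F)$. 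Finally, $\delta(Q)<\delta(F)$ follows by applying (i) in the reverse direction: if instead $\delta(F)\leq\delta(Q)$, then $F$ of smaller or equal $\delta$ would satisfy $w_Q(F)=w(F)$, contradicting $F\in\Phi(w_Q,w)$. The main obstacle is part (i); once that strict-drop statement is in hand, the rest are essentially bookkeeping using Vaqui\'e's augmentation theorem and the definition of $\Phi$.
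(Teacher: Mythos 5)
The first thing to say is that the paper contains no proof of Proposition \ref{2.1.6}: it is recalled as known, with the sentence immediately preceding it delegating the proofs to Lemma 2.11 of \cite{NS} and Proposition 3.8, Corollary 3.11 and Theorem 6.1 of \cite{JN1}. So there is no in-paper argument to compare against; your proposal has to be measured against those sources, and it does gesture at the right circle of ideas (evaluation at a root realizing $\delta(Q')$ for (i)--(ii), the augmentation machinery of Theorems \ref{1.1.19} and \ref{2.1.2} for (iii)--(iv)), but it leaves the one genuinely hard ingredient unproved.

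That ingredient is the characterization $w_Q(f)=w(f)\iff\delta(f)\leq\delta(Q)$, or equivalently the minimal-pair description of the truncation, $w_Q(f)=\bar v(\mathrm{lc}(f))+\sum_{\beta}\min\{\overline w(X-\beta),\delta(Q)\}$, the sum running over the roots $\beta$ of $f$ in $\overline K$ with multiplicity; this is the actual content of \cite{JN} and \cite{JN1} that the paper is quoting. Your derivation of the monotonicity $w_Q\leq w_{Q'}$ by ``applying (i) to the $Q'$-expansion and re-expanding each piece in $Q$'' does not go through: since $w_Q$ is a valuation, applying it to $f=\sum_j g_j(Q')^j$ only yields $w_Q(f)\geq\min_j w_Q\bigl(g_j(Q')^j\bigr)$, a lower bound, whereas you need the upper bound $w_Q(f)\leq\min_j w\bigl(g_j(Q')^j\bigr)=w_{Q'}(f)$; possible cancellation among the terms blocks the direction you want. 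The same issue undercuts the ``strict drop'' in the second implication of (ii), and in (i) the decisive step (``forcing the minimum to be strictly less'') is asserted rather than derived. All of these statements become one-line consequences of the displayed formula --- monotonicity of $\min\{\cdot,\delta\}$ in $\delta$, together with the fact that $Q'$ contributes a root $\alpha'$ with $\overline w(X-\alpha')=\delta(Q')>\delta(Q)$ --- which is exactly why the cited papers establish that formula first. Two smaller points: in (iii), the identity $w_{Q'}=[w_Q;Q',w(Q')]$ is not given by Theorem \ref{1.1.19} (which only yields $w_Q<[w_Q;Q',w(Q')]\leq w$); it is the ``first case'' of Theorem \ref{2.1.2} and should be cited as such. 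Granting the characterization above, your arguments for (iii) and (iv) are correct.
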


	The next two results gives a comparison between key polynomials and ABKPs.
	\begin{theorem}[Theorem 2.17, \cite{Ma}]\label{2.1.2}
		Suppose that $w'<w$ be valuations on $K(X)$ and $\phi$  a key polynomial for $w'.$ Then $\phi$ is an ABKP polynomial for $w$ if and only if it satisfies one of the following two conditions:
		\begin{enumerate}[(i)]
			\item $\phi\in \Phi(w',w).$
			\item $\phi\notin \Phi(w',w)$ and $\deg \phi=\deg (w').$
		\end{enumerate}
		In the first case $w_{\phi}=[w'; \phi, w(\phi)].$ In the second case $w_{\phi}=w'.$
	\end{theorem}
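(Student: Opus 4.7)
The strategy is to prove both directions of the equivalence, establishing the identifications of $w_\phi$ along the way, using Theorem \ref{1.1.19}, Proposition \ref{2.1.6}, and the divisibility $\deg(w')\mid\deg\Phi(w',w)$ from Remark \ref{1.1.20}(ii).

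For the $(\Leftarrow)$ direction in Case (i), I would assume $\phi\in\Phi(w',w)$ and compute $w_\phi$ on a $\phi$-expansion $f=\sum_i f_i\phi^i$. Since $\deg f_i<\deg\phi=\deg\Phi(w',w)$, Theorem \ref{1.1.19} forces $\phi\nmid_{w'}f_i$ and hence $w(f_i)=w'(f_i)$, whence
\begin{align*}
  w_\phi(f)=\min_i\{w(f_i)+iw(\phi)\}=\min_i\{w'(f_i)+iw(\phi)\}=[w';\phi,w(\phi)](f).
\end{align*}
Thus $w_\phi=[w';\phi,w(\phi)]$, so $\phi$ is a key polynomial of minimal degree for $w_\phi$, and a root-theoretic translation then yields $\delta(f)<\delta(\phi)$ for all $f$ with $\deg f<\deg\phi$: the strict inequality $w_\phi<w$ together with $w'(f)=w(f)$ (again by Theorem \ref{1.1.19}) constrains $\overline{w}(X-\alpha)$ on roots of $f$ away from the maximum value attained on roots of $\phi$. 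For Case (ii), I would assume $\phi\notin\Phi(w',w)$ and $\deg\phi=\deg(w')$. Since $\deg(w')\mid\deg\Phi(w',w)$, one has $\deg\Phi(w',w)\geq\deg\phi$; if $w'(\phi)<w(\phi)$, then $\phi$ would be a monic polynomial of degree $\deg\Phi(w',w)$ witnessing the strict inequality, forcing $\phi\in\Phi(w',w)$, a contradiction. Hence $w'(\phi)=w(\phi)$, and the same argument applied to each $f_i$ yields $w'(f_i)=w(f_i)$; therefore
\begin{align*}
  w_\phi(f)=\min_i\{w(f_i)+iw(\phi)\}=\min_i\{w'(f_i)+iw'(\phi)\}=w'(f),
\end{align*}
the last equality because $\phi$ is a key polynomial for $w'$ of minimal degree, so $w'_\phi=w'$. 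This identifies $w_\phi=w'$, and the ABKP property follows exactly as in Case (i).

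For the $(\Rightarrow)$ direction, assume $\phi$ is a key polynomial for $w'$ and an ABKP for $w$. If $\phi\in\Phi(w',w)$ we are in Case (i); otherwise I would show $\deg\phi=\deg(w')$ by contradiction. Suppose $\deg\phi>\deg(w')$ and let $\phi_0$ be a key polynomial for $w'$ of minimal degree, so $w'_{\phi_0}=w'$. Since $\deg\phi_0<\deg\phi$ and $\phi$ is an ABKP for $w$, $\delta(\phi_0)<\delta(\phi)$; a root-theoretic argument (combined with Proposition \ref{2.1.6}(i) applied after confirming that $\phi_0$ qualifies as a truncation-controlling polynomial for $w$) then gives $w_{\phi_0}(\phi)<w(\phi)$, and $w'=w'_{\phi_0}$ promotes this to $w'(\phi)<w(\phi)$, forcing either $\phi$ itself or a monic polynomial of smaller degree into $\Phi(w',w)$. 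Combined with the ABKP minimality of $\delta(\phi)$, this contradicts $\phi\notin\Phi(w',w)$.

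The main obstacle is the forward direction: one must carefully pass from the $\overline{K}$-level inequality $\delta(\phi_0)<\delta(\phi)$ to the $K(X)$-level valuation inequality $w'(\phi)<w(\phi)$. This promotion rests on the identification $w'=w'_{\phi_0}$ (a consequence of $\phi_0$ being a minimal-degree key polynomial for $w'$) together with a root-level analysis relating $\delta$ to $\overline{w}$ that is not explicitly packaged in the excerpted preliminaries, and which must also accommodate the subtlety that $\phi_0$ need not itself be an ABKP for $w$, so Proposition \ref{2.1.6} is applied only after either refining the choice of $\phi_0$ or replacing it by a polynomial in $\Phi(w',w)$ of the same degree.
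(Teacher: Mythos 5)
The paper offers no proof of Theorem \ref{2.1.2}; it is imported verbatim as Theorem 2.17 of \cite{Ma}, so there is no in-paper argument to compare yours against, and your proposal must stand on its own. The routine parts are fine: the identification $w_\phi=[w';\phi,w(\phi)]$ in case (i) via the $\phi$-expansion and $w'(f_i)=w(f_i)$ for $\deg f_i<\deg\Phi(w',w)$, and the use of $\deg(w')\mid\deg\Phi(w',w)$ in case (ii) to force $w'(\phi)=w(\phi)$ and hence $w_\phi=w'$, are both correct. The genuine gap is that in the backward direction you never establish that $\phi$ \emph{is} an ABKP for $w$, i.e.\ that $\delta(f)<\delta(\phi)$ for every $f$ with $\deg f<\deg\phi$. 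The appeal to a ``root-theoretic translation'' in which $w_\phi<w$ ``constrains $\overline{w}(X-\alpha)$ on roots of $f$'' is an assertion, not an argument: $\delta$ is computed with the common extension $\overline{w}$ of $w$ to $\overline{K}(X)$, and converting statements about truncations into inequalities between $\delta$-values requires the root/derivative machinery of \cite{JN1} and \cite{Ma} (e.g.\ the expression of $w(f)$ as a sum over the roots of $f$, or the characterization of $\delta(f)$ through the values $w(\partial_b f)$), none of which you supply and none of which appears in the paper's preliminaries. That implication is precisely the hard content of the theorem; computing $w_\phi$ is the easy part.

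The forward direction has two further concrete defects. You apply Proposition \ref{2.1.6}~(i) with $Q=\phi_0$, a minimal-degree key polynomial for $w'$, but that proposition requires $\phi_0$ to be an ABKP for $w$ --- which is exactly the kind of statement the theorem under proof is meant to decide; you flag this circularity yourself but do not resolve it, and ``refining the choice of $\phi_0$'' is not a resolution. Moreover, even granting $w'(\phi)<w(\phi)$, you only obtain $\deg\Phi(w',w)\le\deg\phi$; the contradiction with $\phi\notin\Phi(w',w)$ follows only when equality holds, and the remaining case $\deg(w')\le\deg\Phi(w',w)<\deg\phi$ --- which the theorem implicitly asserts cannot coexist with $\phi$ being an ABKP for $w$ --- is not excluded by anything you write. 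As it stands, the proposal is a correct skeleton for the bookkeeping and a placeholder for the two essential implications.
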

\begin{theorem}[Theorem 2.18, \cite{Ma}]\label{2.1.7}
	Let $\phi\in \operatorname{KP}(w).$ Then $\phi$ is an ABKP for $w$ if and only if $\deg\phi=\deg(w).$ In this case, $w_\phi=w.$
\end{theorem}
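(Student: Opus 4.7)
My plan is to combine Proposition 2.10 of \cite{Ma} (every ABKP $Q$ for $w$ is a key polynomial of minimal degree for $w_Q$) with the preliminary observation that $w_\phi=w$ whenever $\phi\in\operatorname{KP}(w)$. For the latter, given any $\phi$-expansion $f=\sum_i f_i\phi^i$ with $\deg f_i<\deg\phi$, I would argue $w(f)=\min_i w(f_i\phi^i)$ by contradiction: if the minimum were strictly smaller, at least two indices would realize it, and factoring out $\phi^{i_0}$ at the smallest such index would produce a $w$-equivalence $f_{i_0}\sim_{w} -\phi g$ for some $g\in K[X]$, violating $w$-minimality of $\phi$ since $\deg f_{i_0}<\deg\phi$. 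This yields $w_\phi=w$, which is also the last clause of the statement.

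The forward direction is then immediate: if $\phi\in\operatorname{KP}(w)$ is an ABKP for $w$, Proposition 2.10 of \cite{Ma} makes $\phi$ a key polynomial of minimal degree for $w_\phi=w$, so $\deg\phi=\deg(w)$. For the converse, assume $\deg\phi=\deg(w)$, hence $w_\phi=w$. Suppose toward contradiction that some $f\in K[X]$ with $\deg f<\deg\phi$ satisfies $\delta(f)\ge\delta(\phi)$, and take $f$ of minimal such degree. Then $f$ itself is an ABKP for $w$ (since any smaller-degree $g$ has $\delta(g)<\delta(\phi)\le\delta(f)$), and Proposition 2.10 gives $\deg(w_f)=\deg f<\deg(w)$, so $w_f<w$ strictly. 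Picking a root $\beta$ of $f$ with $\overline{w}(X-\beta)=\delta(f)$ and a root $\alpha$ of $\phi$ with $\overline{w}(X-\alpha)=\delta(\phi)$, the ultrametric computation applied to $\beta-\alpha=(X-\alpha)-(X-\beta)$ in $\overline{K}$ yields $\overline{v}(\beta-\alpha)\ge\delta(\phi)$. A Krasner-type argument then forces $K(\alpha)\subseteq K(\beta)$, contradicting $[K(\beta):K]\le\deg f<\deg\phi=[K(\alpha):K]$.

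The main obstacle is the equality case $\delta(f)=\delta(\phi)$: there several roots of $\phi$ may share the common distance $\delta(\phi)$, so the naive Krasner bound is not strict. The standard fix is to iterate using Proposition \ref{2.1.6}(iv) to produce an ABKP $F\in\Phi(w_f,w)$ with $\delta(F)>\delta(f)=\delta(\phi)$, and then either apply the strict-case Krasner argument to $F$ (when $\deg F<\deg\phi$) or verify directly that $\phi\in\Phi(w_f,w)$ (when $\deg F\ge\deg\phi$), in which case Proposition \ref{2.1.6}(iv) identifies $\phi$ as an ABKP for $w$ and the theorem is proved.
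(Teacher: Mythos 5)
This statement is quoted by the paper as Theorem 2.18 of \cite{Ma}; the paper offers no proof of its own, so your attempt can only be judged as a standalone argument. Two of its three pieces are fine. Your proof that $w_\phi=w$ for any $\phi\in\operatorname{KP}(w)$ is correct: if the minimum in the $\phi$-expansion were strict, at least two indices attain it, and cancelling $\phi^{i_0}$ gives $f_{i_0}\sim_w-\phi g$ with $\deg f_{i_0}<\deg\phi$, contradicting $w$-minimality. Granting Proposition 2.10 of \cite{Ma} (which this paper also invokes), the forward implication then follows as you say.

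The genuine gap is in the converse. Your ``Krasner-type argument'' concluding $K(\alpha)\subseteq K(\beta)$ from $\bar v(\beta-\alpha)\geq\delta(\phi)$ is not available here: $(K,v)$ is an arbitrary valued field of arbitrary rank, with no henselianity or separability hypothesis, and Krasner's lemma fails outright for non-henselian $K$ (e.g.\ $K=\mathbb{Q}$ with the $5$-adic valuation and $\alpha=\sqrt{-1}\in\mathbb{Q}_5$ admits rational $\beta$ arbitrarily close to $\alpha$). Even over a henselian field you never verify the strict inequality $\bar v(\beta-\alpha)>\bar v(\alpha-\alpha')$ for all conjugates $\alpha'$, and this can fail when several roots of $\phi$ realize $\delta(\phi)$. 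What you are implicitly using is that $(\alpha,\delta(\phi))$ is a minimal pair, which for general $K$ is essentially equivalent to $\phi$ being an ABKP --- the very thing to be proved --- so the argument is circular rather than merely incomplete. Your proposed patch does not close this: in the branch $\deg F=\deg\phi$ you assert ``verify directly that $\phi\in\Phi(w_f,w)$,'' i.e.\ $w_f(\phi)<w(\phi)$ and $F\mid_{w_f}\phi$, but no such verification is given (and making it work amounts to the transfinite construction of a complete sequence of ABKPs from \cite{NS}), while the branch $\deg F<\deg\phi$ falls back on the same invalid Krasner step. A correct proof of the converse has to go through the algebraic characterization of $\delta$ via the invariant $\epsilon$ and Hasse derivatives (Novacoski, \cite{JN,JN1}) or the machinery of \cite{Ma} itself, not through root-proximity arguments over $\overline{K}$.
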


 As an application of the above two theorems, we have  the following two results.
\begin{lemma}\label{1.1.11}
	 	 Let $w=[w';\phi,\gamma]$ be an ordinary augmentation of a valuation   $w'$ of $K(X).$ If $\phi'$ is a minimal degree key polynomial for $w',$  then both $\phi'$ and $\phi$ are ABKPs for $w.$ Moreover, if  $\phi\nmid_{w'}\phi',$ then  $\delta(\phi')<\delta(\phi).$
	\end{lemma}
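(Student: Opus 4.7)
The plan is to derive the three claims from two citations already in the paper: Theorem \ref{2.1.2} for identifying ABKPs among key polynomials of a predecessor valuation, and Proposition \ref{2.1.6}(iv) for the strict comparison of $\delta$-values. Throughout, $\phi$ is a key polynomial for $w'$ by the very definition of ordinary augmentation, and $\phi'$ is one by hypothesis; Theorem \ref{1.1.19} (together with Definition \ref{1.1.15}) supplies $w'<w$, so the hypotheses of Theorem \ref{2.1.2} are in force.

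For the first assertion, Corollary \ref{1.1.17} gives $\Phi(w',w)=[\phi]_{w'}$, so $\phi\in\Phi(w',w)$; case (i) of Theorem \ref{2.1.2} then says that $\phi$ is an ABKP for $w$ with $w_\phi=[w';\phi,w(\phi)]=w$. For $\phi'$, the minimality of its degree yields $\deg\phi'=\deg(w')$. Exactly one of the two cases of Theorem \ref{2.1.2} therefore applies: either $\phi'\in\Phi(w',w)$ and case (i) produces $w_{\phi'}=[w';\phi',w(\phi')]$, or $\phi'\notin\Phi(w',w)$ and the degree hypothesis of case (ii) is satisfied, giving $w_{\phi'}=w'$. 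In both alternatives $\phi'$ is an ABKP for $w$.

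For the moreover part, the hypothesis $\phi\nmid_{w'}\phi'$ excludes $\phi'\sim_{w'}\phi$ (take $h=1$ in the definition of $w'$-divisibility), so $\phi'\notin[\phi]_{w'}=\Phi(w',w)$; hence the second alternative above is forced and $w_{\phi'}=w'$. Now $\phi\in\Phi(w',w)=\Phi(w_{\phi'},w)$, and Proposition \ref{2.1.6}(iv) applied with $Q=\phi'$ and $F=\phi$ delivers $\delta(\phi')<\delta(\phi)$. The only point requiring care is the asymmetric choice to invoke Proposition \ref{2.1.6}(iv) with $\phi'$ (rather than $\phi$) in the role of $Q$: this is forced because $w_\phi=w$ prevents the symmetric application to $\phi$, whereas $w_{\phi'}=w'<w$ makes $\Phi(w_{\phi'},w)$ a legitimate object containing $\phi$.
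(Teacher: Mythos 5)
Your proof is correct and follows essentially the same route as the paper: Theorem \ref{2.1.2} to identify $\phi'$ and $\phi$ as ABKPs for $w$ (the paper uses Theorem \ref{2.1.7} for $\phi$, but case (i) of Theorem \ref{2.1.2} gives the same conclusion), and Proposition \ref{2.1.6}(iv) via $w_{\phi'}=w'$ for the inequality $\delta(\phi')<\delta(\phi)$. The only difference is that you handle the ``moreover'' part uniformly, whereas the paper splits into the cases $\deg\phi'=\deg\phi$ and $\deg\phi'<\deg\phi$ and settles the latter directly from the definition of an ABKP; your uniform argument is valid since $\phi'\notin\Phi(w',w)$ and $\deg\phi'=\deg(w')$ force case (ii) of Theorem \ref{2.1.2} regardless of the degree comparison.
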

\begin{proof}
	Since  $\phi'$ is a minimal degree key polynomial for $w',$   Theorem \ref{2.1.2} shows that $\phi'$ is an ABKP for $w$ and as $\phi$ is a minimal degree key polynomial for $w,$ so by Theorem \ref{2.1.7}  $\phi$ is an ABKP for $w.$

Now, suppose that  $\phi\nmid_{w'}\phi'.$ By Corollary \ref{1.1.17}, $\phi'\notin\Phi(w',w)=[\phi]_{w'}.$	Assume first that $\deg\phi'=\deg\phi.$ Then by Theorem \ref{2.1.2}, we have $w'=w_{\phi '}.$
	  Hence, Proposition  \ref{2.1.6} (iv) shows that $\delta(\phi')<\delta(\phi).$ 
	 Finally, if $\deg\phi'<\deg\phi,$  then    $\delta(\phi')<\delta(\phi)$  because $\phi$ is an ABKP for $w.$
	\end{proof}

\begin{lemma}\label{2.1.1}
Let $w'<w$ be valuations of $K(X).$ If $\mathcal{W}=(\rho_i=[w';\chi_i,\gamma_i])_{i\in\mathbf{A}}$ is a continuous family of augmentations of $w'$ such that $\rho_i<w$ for all $i\in\mathbf{A},$ then   each  $\chi_i$ is an ABKP for $w,$ $\rho_i=w_{\chi_i}$ and $\delta(\chi_i)<\delta(\chi_j)$ for all $i<j\in\mathbf{A}.$ Moreover, if $\phi'$ is a minimal degree key polynomial for $w',$ then $\phi'$ is also an ABKP for $w,$ and if  $\chi_i\nmid_{w'}\phi',$ then $\delta(\phi')<\delta(\chi_i)$ for all $i\in\mathbf{A}.$
\end{lemma}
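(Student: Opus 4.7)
The plan is to apply Theorem \ref{2.1.2}, which classifies when a key polynomial for a smaller valuation becomes an ABKP for a larger one, together with the explicit descriptions of the sets $\Phi(\cdot,\cdot)$ provided by Corollaries \ref{1.1.16} and \ref{1.1.17} and Remark \ref{1.1.18}. For each $i\in\mathbf{A}$, the target is to verify case (ii) of Theorem \ref{2.1.2} for the key polynomial $\chi_i$ of $\rho_i$, namely that $\chi_i\notin\Phi(\rho_i,w)$ and $\deg\chi_i=\deg(\rho_i)$.

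First I would fix $i\in\mathbf{A}$ and choose any $j\in\mathbf{A}$ with $j>i$, which exists since $\mathbf{A}$ has no last element (Definition \ref{1.1.14}(i)). The chain $\rho_i<\rho_j\leq w$ combined with Corollary \ref{1.1.16}(ii) yields $\Phi(\rho_i,w)=\Phi(\rho_i,\rho_j)$; since $\rho_j=[\rho_i;\chi_j,\gamma_j]$ is an ordinary augmentation (Definition \ref{1.1.14}(iii)), Corollary \ref{1.1.17} gives $\Phi(\rho_i,\rho_j)=[\chi_j]_{\rho_i}$. The inequivalence $\chi_i\not\sim_{\rho_i}\chi_j$ from Definition \ref{1.1.14}(iii) then forces $\chi_i\notin\Phi(\rho_i,w)$, and since $\chi_i$ is a minimal-degree key polynomial for $\rho_i$ by Remark \ref{1.1.18}(ii), Theorem \ref{2.1.2}(ii) delivers both that $\chi_i$ is an ABKP for $w$ and that $w_{\chi_i}=\rho_i$. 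The inequality $\delta(\chi_i)<\delta(\chi_j)$ for $i<j$ is then immediate from Proposition \ref{2.1.6}(iv) applied to $\chi_j\in\Phi(\rho_i,w)=\Phi(w_{\chi_i},w)$.

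For the moreover part, I would first record that $\Phi(w',w)=[\chi_i]_{w'}$ by the same strategy applied to the chain $w'<\rho_i\leq w$ and the ordinary augmentation $\rho_i=[w';\chi_i,\gamma_i]$. Since $\deg\phi'=\deg(w')$, Theorem \ref{2.1.2} applies in either of its cases and shows $\phi'$ is an ABKP for $w$ regardless of whether it lies in $\Phi(w',w)$. Under the additional hypothesis $\chi_i\nmid_{w'}\phi'$, I would note that $\phi'\in[\chi_i]_{w'}$ would mean $\phi'\sim_{w'}\chi_i$, which gives $\chi_i\mid_{w'}\phi'$ (with quotient $1$) and contradicts the hypothesis; therefore $\phi'\notin\Phi(w',w)$, so Theorem \ref{2.1.2}(ii) yields $w_{\phi'}=w'$, and Proposition \ref{2.1.6}(iv) applied to $\chi_i\in\Phi(w',w)=\Phi(w_{\phi'},w)$ concludes $\delta(\phi')<\delta(\chi_i)$.

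The main subtlety lies in identifying $\Phi(\rho_i,w)$ without a direct description of $w$ as an augmentation of $\rho_i$: one must pass through an auxiliary index $j>i$ (which uses the no-last-element property of $\mathbf{A}$), and then extract the non-membership $\chi_i\notin\Phi(\rho_i,w)$ from Definition \ref{1.1.14}(iii). Beyond this, the proof is a direct orchestration of the results already stated in this section.
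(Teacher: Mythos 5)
Your proposal is correct and follows essentially the same route as the paper: identify $\Phi(\rho_i,w)=\Phi(\rho_i,\rho_j)=[\chi_j]_{\rho_i}$ via Corollary \ref{1.1.16}(ii), use the inequivalence $\chi_i\not\sim_{\rho_i}\chi_j$ to place $\chi_i$ in case (ii) of Theorem \ref{2.1.2} (giving $\rho_i=w_{\chi_i}$), and then get $\delta(\chi_i)<\delta(\chi_j)$ from $\chi_j\in\Phi(w_{\chi_i},w)$ and Proposition \ref{2.1.6}(iv). The only cosmetic difference is in the final step, where the paper cites Lemma \ref{1.1.11} for $\delta(\phi')<\delta(\chi_i)$ while you rerun the same argument inline ($\phi'\notin\Phi(w',w)$, hence $w_{\phi'}=w'$, then Proposition \ref{2.1.6}(iv)), which is exactly how Lemma \ref{1.1.11} is proved anyway.
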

\begin{proof}
	Since  $\mathcal{W}=(\rho_i)_{i\in\mathbf{A}}$ is a continuous family of augmentations of $w',$  so for all $i<j\in\mathbf{A},$ we have that $\chi_j\not\sim_{\rho_i}\chi_i$ and  $\rho_j=[\rho_i;\chi_j,\gamma_j]$ is an ordinary augmentation of $\rho_i.$ Therefore, by   Remark \ref{1.1.18} (iv) and Corollary \ref{1.1.16} (ii) we have that 
	\begin{align*}
		\chi_i\notin\Phi(\rho_i,\rho_j)=[\chi_j]_{\rho_i}=\Phi(\rho_i,w),
	\end{align*}
	i.e., $\rho_i(\chi_i)=\rho_j(\chi_i),$ which in view of Corollary \ref{1.1.16} (ii),  implies that  $\rho_i(\chi_i)=w(\chi_i),$ and as $\deg\chi_i=\deg\rho_i,$ 
	 so by Theorem \ref{2.1.2}, $\chi_i$ is an ABKP for $w$ and
	 \begin{align}\label{1.13}
	 	\rho_i=w_{\chi_i}~\text{for all $i\in\mathbf{A}$}.
	 \end{align}
	 Again from Corollary \ref{1.1.16} (ii) and  Remark \ref{1.1.18} (iv), it follows that $$\chi_j\in[\chi_j]_{\rho_i}=\Phi(\rho_i,\rho_j)=\Phi(\rho_i,w)~\text{for all $i<j\in\mathbf{A},$}$$
	 i.e.,
	$\rho_i(\chi_j)<w(\chi_j),$ which on using (\ref{1.13}), together with the fact that
	  $\deg\chi_i=\deg\chi_j,$ implies  that $\chi_j\in\Phi(w_{\chi_i},w).$  Hence from Proposition \ref{2.1.6} (iv), $\chi_j$ 
	is an ABKP for $w$ and 
	\begin{align*}
		\delta(\chi_i)<\delta(\chi_j)~\text{for all $i<j\in\mathbf{A}$}.
	\end{align*}
	Since 
	  $w'<w,$  so $w'(\phi')\leq w(\phi').$
	  Now on using Theorem \ref{2.1.2}, together with the hypothesis that $\phi'$ is a minimal degree key polynomial for $w',$ i.e., $\deg\phi'=\deg (w'),$  we get that $\phi'$ is an ABKP for $w.$
	   Keeping in mind   that $\chi_i\nmid_{w'} \phi',$  it immediately  follows from  Lemma \ref{1.1.11}, that
	  $	\delta(\phi')<\delta(\chi_i)~\text{for all $i\in\mathbf{A}.$}$
\end{proof}

\begin{remark}\label{2.1.9}
	In the above lemma, if $w=[\mathcal{W};\phi,\gamma]$ is a limit augmentation of an essential continuous family $\mathcal{W}=(\rho_i=[w';\chi_i,\gamma_i])_{i\in\mathbf{A}}$  of augmentations of   $w',$  then  by Theorem \ref{2.1.7}, $\phi$ is an ABKP for $w,$ and as  $\phi$ is an MLV limit key polynomial for $\mathcal{W},$ so $\deg\chi_i<\deg\phi.$ Consequently,
	$$\delta(\chi_i)<\delta(\phi), ~\forall~i\in\mathbf{A}.$$
\end{remark}
In the next  result we give some properties of an induced complete sequence of ABKPs.
\begin{lemma}\label{2.1.5}
	Let $\{Q_i\}_{i\in\Delta}$ be an induced complete sequence of ABKPs for $w$ such that $\vartheta_{j}\neq \emptyset$ for some $j\in\Delta.$ Then  the following holds:
	\begin{enumerate}[(i)]
		\item $Q_{i'}\in\Phi(w_{Q_i},w)$ for every $i<i'\in\Delta_{j}=\{j\}\cup\vartheta_{j}.$
		\item $\mathcal{W}_j=(w_{Q_i})_{i\in\vartheta_{j}}$ is a continuous family of augmentations of $w_{Q_j}.$
	\end{enumerate}
		\end{lemma}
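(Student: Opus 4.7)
For part (i), I would work directly from the description of $\Phi(w_{Q_i}, w)$ as the set of monic polynomials $g$ of minimal degree satisfying $w_{Q_i}(g) < w(g)$. Since $i < i'$, Remark~\ref{1.1.3}(v) gives $\delta(Q_i) < \delta(Q_{i'})$, and then Proposition~\ref{2.1.6}(i) yields $w_{Q_i}(Q_{i'}) < w(Q_{i'})$; in particular $w_{Q_i} < w$. By Remark~\ref{1.1.3}(iv) we have $\deg Q_i = \deg Q_{i'}$, so it remains only to rule out smaller-degree competitors. For any $g \in K[X]$ with $\deg g < \deg Q_i$, the $Q_i$-expansion of $g$ reduces to the single term $g = g_0$, so $w_{Q_i}(g) = w(g)$ trivially. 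Thus $\deg(\Phi(w_{Q_i}, w)) = \deg Q_i = \deg Q_{i'}$, and the monic polynomial $Q_{i'}$ lies in $\Phi(w_{Q_i}, w)$.

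For part (ii), I would verify each of the conditions of Definition~\ref{1.1.14} with base valuation $w_{Q_j}$, key polynomials $\chi_i := Q_i$, and values $\gamma_i := w(Q_i)$ for $i \in \vartheta_j$. The set $\vartheta_j$ has no last element by Remark~\ref{1.1.3}(i). Applying (i) with the pair $(j,i)$ yields $Q_i \in \Phi(w_{Q_j}, w)$, so by Proposition~\ref{2.1.6}(iii) the polynomial $Q_i$ is a key polynomial for $w_{Q_j}$ and $w_{Q_i} = [w_{Q_j}; Q_i, w(Q_i)]$. All $Q_i$ with $i \in \vartheta_j$ share the common degree $\deg Q_j$ by Remark~\ref{1.1.3}(iv), and $(\gamma_i)_{i \in \vartheta_j}$ is strictly increasing by Remark~\ref{1.1.3}(v).

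The only step I expect to require care is the non-equivalence $Q_{i'} \not\sim_{w_{Q_i}} Q_i$ demanded by Definition~\ref{1.1.14}(iii) for $i < i'$ in $\vartheta_j$. Here part (i) combined with Proposition~\ref{2.1.6}(iii) gives $w_{Q_{i'}} = [w_{Q_i}; Q_{i'}, w(Q_{i'})]$, and Corollary~\ref{1.1.17} then identifies $\Phi(w_{Q_i}, w_{Q_{i'}}) = [Q_{i'}]_{w_{Q_i}}$. Since $w_{Q_i}(Q_i) = w(Q_i)$ (immediate from the trivial $Q_i$-expansion of $Q_i$), Corollary~\ref{1.1.16}(ii) applied to the chain $w_{Q_i} < w_{Q_{i'}} \le w$ forces $w_{Q_{i'}}(Q_i) = w_{Q_i}(Q_i)$. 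Hence $Q_i \notin \Phi(w_{Q_i}, w_{Q_{i'}}) = [Q_{i'}]_{w_{Q_i}}$, which is exactly $Q_i \not\sim_{w_{Q_i}} Q_{i'}$, completing the verification that $\mathcal{W}_j$ is a continuous family of augmentations of $w_{Q_j}$.
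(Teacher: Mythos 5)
Your proposal is correct and follows essentially the same route as the paper: part (i) via Remark \ref{1.1.3} and Proposition \ref{2.1.6}(i) (you merely make explicit the degree-minimality check that the paper leaves implicit), and part (ii) via Proposition \ref{2.1.6}(iii) to obtain the augmentation structure and Corollary \ref{1.1.16}(ii) together with the stability of $w_{Q_i}(Q_i)$ to rule out $Q_i\sim_{w_{Q_i}}Q_{i'}$. The only cosmetic difference is that for the last step the paper cites Theorem \ref{1.1.19} where you cite Corollary \ref{1.1.17}; the two are interchangeable here.
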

\begin{proof}
	\noindent(i)~
	Follows from  Remark \ref{1.1.3} and Proposition \ref{2.1.6} (i).
\smallskip
	
		\noindent (ii)~	Since for each $i\in\Delta,$ $Q_i$ is an ABKP for $w,$ so $w_{Q_i}$ is a valuation on $K(X).$ 
		By hypothesis, as $\vartheta_j\neq \emptyset,$ so by (i),  for each $i\in\vartheta_j$ we have that  $Q_i\in\Phi(w_{Q_j},w)$ and $\deg Q_i=\deg Q_j,$ where $Q_j$ is the ABKP corresponding to  $\{j\}.$  From Proposition \ref{2.1.6} (iii), it follows that each $Q_i$ is a key polynomial for $w_{Q_j}$ and  
		$$w_{Q_i}=[w_{Q_j};Q_i, w(Q_i)].$$ Similarly, for each $i<i'\in\vartheta_{j},$ we get that 
		$Q_{i'}$ is a key polynomial for $w_{Q_i}$ and $$w_{Q_{i'}}=[w_{Q_i}; Q_{i'}, w(Q_{i'})].$$  Now by Corollary \ref{1.1.16} (ii),  $\Phi(w_{Q_i},w)=\Phi(w_{Q_i}, w_{Q_{i'}}),$ and as $Q_i\notin\Phi(w_{Q_i},w),$ so 
		\begin{align*}
			w_{Q_i}(Q_{i})=w_{Q_{i'}}(Q_i),~ \forall ~ i<i'\in\vartheta_{j},
		\end{align*}
		which in view of  Theorem \ref{1.1.19}, implies that $Q_{i'}\not\sim_{w_{Q_i}} Q_i.$
		Hence 
		$\mathcal{W}_{j}=(w_{Q_i})_{i\in\vartheta_j}$ is a continuous family of augmentations of $w_{Q_j}.$
	 \end{proof}

	\section{Proof of Main Results}

	\begin{proof}[Proof of Theorem \ref{1.1.2}]
		Since $\{Q_i\}_{i\in\Delta}$ is an induced complete sequence of ABKPs for $w,$  so by Remark \ref{1.1.3} (i), $\Delta=\bigcup_{j\in I}\Delta_j,$ where $I=\{0,1,\ldots, N\}$ or $I=\mathbb{N}\cup\{0\}$ and for each $j\in I,$  $\Delta_j=\{j\}\cup\vartheta_{j},$  where $\vartheta_j$ is either  empty or an ordered set without a last element. In view of hypothesis as $\Delta$ has no last element, so by  (\ref{1.6}), either $I=\mathbb{N}\cup\{0\}$  or  $I=\{0,1,\ldots, N\}$ and $\Delta_N=\{N\}\cup\vartheta_N,$ with $\vartheta_N\neq \emptyset.$ 
		Since for each $i\in\Delta,$ $Q_i$ is an ABKP for $w,$ so $w_{Q_i}$ is a valuation on $K(X)$ and we  denote it by $w_i.$  
		  
		Arguing as in the proof of  \cite[Theorem 3.1]{SA}, we get that each $Q_i,$ $i\in\Delta$ is a key polynomial for $w_{Q_i}$ of minimal degree and 
		\begin{enumerate}[(1)]
		\item if $\vartheta_{j}= \emptyset,$ then $w_j\longrightarrow w_{j+1}$ is an ordinary augmentation.
		\item if $\vartheta_{j}\neq \emptyset,$ then $w_j\longrightarrow w_{j+1}$ is a limit augmentation of an essential continuous family $\mathcal{W}_j=(\rho_i=w_{Q_i})_{i\in\vartheta_{j}}$ of augmentations of $w_j.$
	\end{enumerate}
	In either case, denote
	\begin{align}\label{1.5}
		\gamma_i=w(Q_i),~\forall~i\in\Delta.
	\end{align}

		Suppose first that $I=\{0,1,\ldots, N\}$  and $\vartheta_N\neq \emptyset.$ Then from above
		\begin{align}\label{1.4}
			w_0\xrightarrow{Q_1,\gamma_1} w_1\xrightarrow{Q_2,\gamma_2}\cdots \longrightarrow w_{N-1}\xrightarrow{Q_N,\gamma_N} w_N<w,
		\end{align}
	   is a finite MLV chain  of $w_N,$  
	  	and  this chain is complete because $w_0=w_{Q_0},$ where $\deg Q_0=1,$   is a depth zero valuation.  Since $\vartheta_N\neq \emptyset,$ so by Lemma \ref{2.1.5} (ii), we have that
	  $\mathcal{W}_N=(\rho_i=w_{Q_i})_{i\in\vartheta_N}$ is a continuous family of augmentations of $w_N.$ We now claim the following:
	  	\begin{enumerate}[(a)]
	  		\item  $w$ is the stable limit  of $\mathcal{W}_N,$
	  		\item $\deg (\Phi(w_N, w))=\deg (w_N)$ and $Q_N\notin \Phi(w_N, w).$
	  	\end{enumerate}
	  	Let $f$ in $K[X]$ be any polynomial.  As $\Lambda$ is complete, so there exists some $j_0\in\Delta$ such that 
	\begin{align}\label{1.3}
			w_{j_0}(f)=w(f).
	\end{align}
Take  $i\in\vartheta_N$ such that  $j_0<i.$ Then,   $\delta(Q_{j_0})<\delta(Q_i)$ and,  in view of  (\ref{1.3}),  Proposition \ref{2.1.6} (ii) shows that $w_{i}(f)=w(f).$ Therefore, $$\rho_{\mathcal{W}_N}(f)=w_{j_0}(f)=w_{i}(f)=w(f)~\text{for every $i\in\vartheta_N$}.$$
 Thus, every polynomial in $K[X]$ is $\mathcal{W}_N$-stable, i.e., $\rho_{\mathcal{W}_N}$ is a valuation on $K(X)$ and hence is a stable limit of $\mathcal{W}_N.$ From the above argument it also follows that 
  \begin{align*}
	\rho_{\mathcal{W}_N}(f)=w(f), ~\forall~ f\in K[X]
\end{align*}
  and this proves  (a).
 
  Since $\vartheta_N\neq\emptyset,$ so by Lemma \ref{2.1.5} (i)   $Q_i\in\Phi(w_{Q_N},w)$ for every $i\in\vartheta_N,$ which implies that  $$\deg (\Phi(w_{N},w))=\deg (w_{N})$$ and $Q_N\notin\Phi(w_{N},w)$ because $w_{Q_N}(Q_N)=w(Q_N),$ proving  (b). Thus in view of (\ref{1.4}) and the claim,  after $N$ augmentation steps, we have that  $w$ is the stable limit of a continuous family  $\mathcal{W}_N=(\rho_i)_{i\in\vartheta_N},$ of augmentations of $w_N$:
 $$w_0\xrightarrow{Q_1,\gamma_1} w_1\xrightarrow{Q_2,\gamma_2}\cdots \longrightarrow w_{N-1}\xrightarrow{Q_N,\gamma_N} w_N\xrightarrow{(\rho_i)_{i\in\vartheta_N}} \rho_{\mathcal{W}_N}=w,$$  where $w_j=w_{Q_j},$ $\gamma_j=w(Q_j)$ for every $0\leq j\leq N,$ such that $\deg(\Phi(w_N,w) )=\deg(w_N)$ and $Q_N\notin\Phi(w_N,w).$
 
 Assume now that $I=\mathbb{N}\cup\{0\}.$  Then keeping in mind (1),  (2) and   (\ref{1.5}),  we have that
 $$w_0\xrightarrow{Q_1,\gamma_1} w_1\xrightarrow{Q_2,\gamma_2}\cdots \longrightarrow w_{j}\xrightarrow{Q_{j+1},\gamma_{j+1}} w_{j+1}\longrightarrow\cdots$$ is a complete infinite MLV chain of $w.$
Since $\{Q_i\}_{i\in\Delta}$ is an induced complete sequence of ABKPs for $w,$ so  $\mathcal{W}=(w_i=w_{Q_i})_{i\in\Delta}$ is a totally ordered family of valuations, taking values in a common value group, such that the bijection $i\mapsto w_{Q_i},$ is an isomorphism  between $\Delta$ and $\mathcal{W}.$ It only remains to prove that $w$ is the stable limit of $\mathcal{W}.$  For this it is enough to  show that every polynomial in $ K[X]$ is $\mathcal{W}$-stable. Let $f$ be any polynomial in $K[X].$ Since $\Lambda$ is an induced complete sequence of ABKPs for $w,$ so there exist some $j_0\in \Delta$ such that 
\begin{align*}
	w_{j_0}(f)=w(f).
\end{align*} 
For all $i>j_0$  in $\Delta,$ we have $\delta(Q_{j_0})<\delta(Q_i),$ which together with the above equality, on using   Proposition \ref{2.1.6} (ii) implies  that 
$	w_{i}(f)=w(f).$
Therefore,
$$w_{i}(f)=w_{j_0}(f),~\forall~ i>j_0\in\Delta,$$
so every polynomial  in $K[X]$ is $\mathcal{W}$-stable, i.e.,  $\rho_{\mathcal{W}}$ is a valuation on $K(X),$ and 
$$\rho_{\mathcal{W}}(f)=w_{Q_{j_0}}(f).$$ It also follows from the above argument that
$$\rho_{\mathcal{W}}(f)=w(f),~\forall ~f\in K[X].$$ 
Thus $w$ is the stable limit of $\mathcal{W}.$
		\end{proof}

	\begin{proof}[Proof of Theorem \ref{1.1.4}]
		Since $w$ has an MLV chain of type (\ref{1.1}) or (\ref{1.2}),
		 so
		 by Lemmas \ref{1.1.11},  \ref{2.1.1} and Remark \ref{2.1.9} we have that
		 \begin{enumerate}[(1)]
		 	\item if $w_j\longrightarrow w_{j+1}$ is an ordinary augmentation, i.e.,
		 	$w_{j+1}=[w_j;\phi_{j+1},\gamma_{j+1}]$ for some $\phi_{j+1}\in \operatorname{KP}(w_j),$
		 	 then $\phi_j$ and  $\phi_{j+1}$ are ABKPs for $w_{j+1}.$
		 	\item  if $w_j\longrightarrow w_{j+1}$ is a limit augmentation, i.e.,
		 	$w_{j+1}=[\mathcal{W}_j;\phi_{j+1},\gamma_{j+1}]$ for some  $\phi_{j+1}\in \operatorname{KP}_{\infty}(w_j),$ where $\mathcal{W}_j=(\rho_i=[w_i;\chi_i,\gamma_i])_{i\in\mathbf{A}_j}$ is an essential continuous family of augmentations of $w_j,$
		 	then $\phi_j,$ $\phi_{j+1}$ and $\chi_i$ for all $i\in\mathbf{A}_j$ are ABKPs for $w_{j+1}.$ 
		 \end{enumerate}
	  Now  by Remark \ref{1.1.20} and the  definition of an MLV chain of $w,$ if $w_j\longrightarrow w_{j+1}$ is an ordinary augmentation, then $$\phi_j\notin\Phi(w_j,w_{j+1})=[\phi_{j+1}]_{w_j},~\text{ i.e.,}~ \phi_{j+1}\nmid_{w_j}\phi_j,$$  and if $w_j\longrightarrow w_{j+1}$ is a  limit augmentation, then $$\phi_j\notin\Phi(w_j,w_{j+1})=[\chi_{i}]_{w_j},~  \text{i.e.,}~  \chi_{i}\nmid_{w_j}\phi_j,~\forall ~i\in\mathbf{A}_j.$$
	  Since, $w_j<w_{j+1}<w$ and $\rho_i<\rho_{i'}<w,$ so by Corollary \ref{1.1.16} (ii), we have that $$\Phi(w_j,w_{j+1})=\Phi(w_j,w)~\text{ and }~ \Phi(\rho_i,\rho_{i'})=\Phi(\rho_i,w),~ \text{for all $i<i'\in\mathbf{A}_j.$}$$   
	  Therefore, keeping in mind the definition of an MLV chain of $w$  together with  Theorem \ref{2.1.2}, 
	  Lemmas \ref{1.1.11},  \ref{2.1.1} and Remark \ref{2.1.9} it immediately follows that   
		 \begin{enumerate}[(a)]
		 	\item $\phi_j,$ $\phi_{j+1},$  $Q_i:=\chi_i,$ $i\in\mathbf{A}_j$ are ABKPs for $w,$ and 
		 	  $$ w_j=w_{\phi_j},~	 	\rho_{i}=w_{Q_{i}},~  w_{j+1}=w_{\phi_{j+1}}
	,~ \text{for all $i\in\mathbf{A}_j.$}$$
		 	\item $\deg \phi_j=\deg Q_i<\deg \phi_{j+1}$ for all $i\in\mathbf{A}_j.$
		 	\item  $\delta(\phi_j)<\delta(Q_i)<\delta(Q_{i'})<\delta(\phi_{j+1})$  for all $i<i'\in\mathbf{A}_j.$
		 	\item If $w_j\longrightarrow w_{j+1}$ is an ordinary augmentation, then $\phi_{j+1}\in\Phi(w_{\phi_j},w).$
		 \end{enumerate}
	 
	 Suppose first that $w$ is the stable limit of a continuous family of augmentations $\mathcal{W}_r=(\rho_i=[w_r;\chi_i,\gamma_i])_{i\in\mathbf{A}_r}$ of $w_r$
	 $$	w_0\xrightarrow{\phi_1,\gamma_1} w_1\xrightarrow{\phi_2,\gamma_2}\cdots \longrightarrow w_{r}\xrightarrow{(\rho_i)_{i\in\mathbf{A}_r}} \rho_{\mathcal{W}_r}=w.$$
	 Then from Lemma \ref{2.1.1}, we have that    $Q_i:=\chi_i,$ for all $i\in\mathbf{A}_r,$ is an ABKP for $w,$  
	 \begin{align}
	 	\rho_i&=w_{Q_i},~\text{ and} \label{1.15}\\
	 	\delta(\phi_r)&<\delta(Q_i)<\delta(Q_{i'})~\forall i<i'\in\mathbf{A}_r.\label{1.16}
	 \end{align}
		Let $I=\{0,1,\ldots, r\}$ and for every $j\in I,$ let $\Delta_j=\{j\}\cup\mathbf{A}_j,$ where in view of (1), (2)  $\mathbf{A}_j$ is either empty or an ordered set without a last element, for  $0\leq j\leq r-1.$   Let $\Delta=\bigcup_{j=0}^{r}\Delta_j,$   keeping in mind (1), (2) and the continuous family $\mathcal{W}_r,$   consider the set $\Lambda=\{Q_i\}_{i\in\Delta},$ where $Q_j:=\phi_j$ for every $j\in I.$  
		 Then in view of  (c) and (\ref{1.16}) we have that $$\delta(Q_i)<\delta(Q_{i'})~\forall i<i'\in\Delta.$$ Moreover,   the set $\Lambda$ satisfies the properties of Remark \ref{1.1.3}.    
		Let $f$ in $K[X]$ be any polynomial. Since  $w$ is the stable limit of $\mathcal{W}_r,$ so
		 $f$ is $\mathcal{W}_r$-stable.  Therefore, there exists some $i_0\in\mathbf{A}_r\subset\Delta$ such that 
		 \begin{align*}
		w(f)&=\rho_{i_0}(f)\\
		&=w_{Q_{i_0}}(f)~\text{(by (\ref{1.15}))}.
	\end{align*}
		If $\deg Q_r=\deg Q_{i_0}\leq\deg f,$ then  $\Lambda$  fulfills the condition to be a complete sequence of ABKPs for $w.$ Otherwise, $\deg Q_i\leq\deg f<\deg Q_{j+1}$ for  all $i\in\Delta_{j}$ and some minimal $j<r.$ Since  
		$\delta(c^{-1}f)=\delta(f),$ where $c\in K$ is the leading coefficient of $f,$ so we can assume  without loss of generality that $f$ is monic.
		If $\delta(f)\leq\delta(Q_i)$ for some $i\in\Delta_{j},$ then  clearly   $w_{Q_i}(f)=w(f).$ We now show that the case  $\delta(f)>\delta(Q_i)$ for every $i\in\Delta_{j},$ does not occur. If $\mathbf{A}_j=\emptyset,$  then  $f \in\Phi(w_{Q_j},w)$ and from (d) we also have that $Q_{j+1}\in\Phi(w_{Q_j},w),$ which implies that $\deg f=\deg Q_{j+1}.$  Therefore  $\mathbf{A}_j\neq\emptyset.$
		If  $\deg f=\deg Q_i,$   then    $f$ is  an ABKP for $w$ and by Remark \ref{1.1.20} and Corollary \ref{1.1.16},   $f$  belongs to the set $\{Q_{i}\mid i\in\mathbf{A}_j\},$   which  contradicts  the definition of $\mathbf{A}_j.$ 
		So $\deg Q_i<\deg f<\deg Q_{j+1}.$   Since for every $i\in\Delta_{j},$ $\delta(Q_i)<\delta(f),$  so  $w_{Q_i}(f)<w(f).$  
		In particular,  $w_{Q_i}(f)<w(f)$ for every $i\in\mathbf{A}_j,$   which in view of 
		 Proposition \ref{2.1.6} (ii), implies  that $w_{Q_{i}}(f)<w_{Q_{i'}} (f)$ for every $i<i '\in\mathbf{A}_j.$ By definition of MLV chain, it now follows that $\deg f\geq \deg Q_{j+1},$ which is not the case.
		Hence $\Lambda$ is an induced complete sequence of ABKPs for $w$ such that $\Delta$ has no last element.

		Assume now that $w$ is the stable limit of a complete infinite MLV chain,
		$$	w_0\xrightarrow{\phi_1,\gamma_1} w_1\xrightarrow{\phi_2,\gamma_2}\cdots \longrightarrow w_{j}\xrightarrow{\phi_{j+1},\gamma_{j+1}} w_{j+1}\longrightarrow\cdots$$
		i.e., $w$ is the stable limit of $\mathcal{W}=(w_i)_{i\in I},$ where $I=\mathbb{N}\cup\{0\}$ (see Remark \ref{1.1.10}).
		  For every $j\in I,$ let $\Delta_j=\{j\}\cup\mathbf{A}_j,$ where $\mathbf{A}_j$ is  an ordered set without a last element, whenever $w_j\longrightarrow w_{j+1}$ is a limit augmentation,  or  is an empty set, if $w_j\longrightarrow w_{j+1}$ is an ordinary augmentation. Let   $\Delta=\bigcup_{j\in I}\Delta_j,$
		   and, we denote $Q_j:=\phi_j$ for every $j\in I.$  
		    Keeping in mind (1), (2)  consider the set $\Lambda=\{Q_i\}_{i\in\Delta}.$ For any $i<i'\in\Delta$ by (c) we have that $$\delta(Q_i)<\delta(Q_{i'}),$$ and therefore the set $\Lambda$ satisfies the properties of Remark \ref{1.1.3}. 
		   Now for any
		  polynomial $f\in K[X],$ on using the fact that $w$ is the stable limit of $\mathcal{W},$ we get   $$w(f)=w_{Q_{i_0}}(f) ~\text{for some}~ i_0\in\Delta.$$
		   Now, arguing  similarly as in the previous case, we have that for every polynomial $f\in K[X]$ there exists a polynomial $Q_i\in\Lambda$ such that $\deg Q_i \leq\deg f$ and $w_{Q_i}(f)=w(f).$  Hence,  $\{Q_i\}_{i\in\Delta}$ is an induced complete sequence of ABKPs for $w$ such that $\Delta$ has no last element.
		   
		    Thus, in either case  $\Lambda=\{Q_i\}_{i\in\Delta}$ is  an induced complete sequence of ABKPs for $w$ such that   
		   \begin{itemize}
		   	\item $\Delta=\bigcup_{j\in I}\Delta_j,$ with $I=\{0,1,\ldots, r\}$ or $\mathbb{N}\cup\{0\}$ and  $\Delta_j=\{j\}\cup\mathbf{A}_j$ for all $j\in I.$ Moreover,  $Q_j=\phi_j$ for all $j\in I.$
		   	\item  $w_j\longrightarrow w_{j+1}$ is an ordinary augmentation if and only if  $\vartheta_j=\emptyset.$ 
		   	\item if $w_j\longrightarrow w_{j+1}$ is a limit augmentation (or $w_j\longrightarrow w$ is a stable limit step) 
		   	with respect to an essential continuous family (or continuous family) $\mathcal{W}_j=(\rho_i)_{i\in\mathbf{A}_j}$ of augmentations of $w_j,$   then $\vartheta_{j}=\mathbf{A}_j$ and   $Q_i=\chi_i$ for all $i\in\mathbf{A}_j.$ 
		   \end{itemize}
	\end{proof}
	
	\section*{Acknowledgement}
	We would like to thank the anonymous referee for a careful reading and providing useful suggestions which led to an improvement in the presentation of the paper.
	The	research of  first author is supported by CSIR (Grant No.\  09/045(1747)/2019-EMR-I).

	\bibliographystyle{amsplain}

\begin{thebibliography}{99}
			\bibitem{Ma}  M.\ Alberich-Carramin\~ ana, A.\ F.\ F.\  Boix, J.\ Fern\'andez, J.\ Gu\`ardia, E.\ Nart and J.\  Ro\'e,  Of limit key polynomials.  {\it Illinois J.\  Math.} {\bf 65}(1) (2021),  201–229.
				\bibitem{M} S.\  MacLane,  A construction for absolute values in polynomial rings. {\it Trans.\  Amer.\  Math.\  Soc.} {\bf 40}(3) (1936),  363-395.
				\bibitem{MMS}  W.\ Mahboub, A.\  Mansour and M.\ Spivakovsky,  On common extensions of valued fields. {\it J.\  Algebra}  {\bf 584} (2021), 1-18.
			\bibitem{SA}  S.\ Mavi and A.\ Bishnoi, MacLane-Vaqui\'e chains and valuation-transcendental extensions. To appear in {\it J.\  commut.\  algebra} (2022).
		\bibitem{EN}  E.\ Nart,  Key polynomials over valued fields. {\it Publ.\  Matemat.}  {\bf 64}(1) (2020),  195–232.
		\bibitem{EN1} E.\ Nart, MacLane-Vaquié chains of valuations on a polynomial ring.  {\it  Pac.\	J.\ Math.} {\bf 311} (2021), 165-195.
		\bibitem {NS} J.\ Novacoski and M.\  Spivakovsky,  Key polynomials and pseudo-convergent sequences. {\it J.\  Algebra} {\bf 495} (2018), 199-219.
		\bibitem {JN} J.\  Novacoski,  Key polynomials and minimal pairs. {\it J.   Algebra} {\bf 523} (2019), 1-14.
		\bibitem{JN1} J.\ Novacoski,  On MacLane-Vaqui\'e key polynomials. {\it J.\   Pure Appl.\  Algebra} {\bf 225}(8) (2021),  106644.
			\bibitem{V}  M.\ Vaqui\'e,   Extension d'une valuation. {\it Trans.\  Amer.\  Math.\  Soc.} {\bf 359}(7) (2007),  3439-3481
	\end{thebibliography}
	
\end{document}